\newcolumntype{H}{>{\setbox0=\hbox\bgroup}c<{\egroup}@{}}
\theoremstyle {plain}  \newtheorem {theorem} {Theorem}}
\theoremstyle {plain}  \newtheorem {lemma} {Lemma}}
\theoremstyle {plain}  \newtheorem {definition} {Definition}}
\theoremstyle {plain}  \newtheorem {example} {Example}}
\theoremstyle {plain}  \newtheorem {remark} {Remark}}
\theoremstyle {plain}  \newtheorem {property} {Property}}
\title{Orbitopal fixing for the full (sub-)orbitope and application to the Unit Commitment Problem}
\author[1,2]{Pascale Bendotti\thanks{pascale.bendotti@edf.fr}}
\author[2]{Pierre Fouilhoux\thanks{pierre.fouilhoux@lip6.fr}}
\author[1,2]{C\'ecile Rottner\thanks{cecile.rottner@edf.fr}}
\affil[1]{EDF R\&D,  7 Boulevard Gaspard Monge, 
              91120 Palaiseau, France}
\affil[2]{   Sorbonne Universit\'e, 
              LIP6, 
              4 Place Jussieu, 
              75005 Paris, France}
\date{March, 2018}
\begin{document}

\maketitle

\begin{abstract}
This paper focuses on integer linear programs where solutions are binary matrices, and the corresponding symmetry group is the set of all column permutations. 
Orbitopal fixing, as introduced in \cite{Kaibel07}, is a technique designed to break symmetries in the special case of  partitioning (resp. packing) formulations involving matrices with exactly (resp. at most)  one 1-entry in each row.
The main result of this paper is to extend orbitopal fixing to the full orbitope, defined as the convex hull of binary matrices with lexicographically nonincreasing columns.
We determine all the variables whose values are fixed in the intersection of an hypercube face with the full orbitope.
Sub-symmetries arising in a given subset of matrices are also considered, thus leading to define the full sub-orbitope in the case of the sub-symmetric group.
We propose a linear time orbitopal fixing algorithm handling both symmetries and sub-symmetries.
We introduce a dynamic variant 
 of this algorithm where the lexicographical order follows the branching decisions occurring along
the B\&B search. 
Experimental results for the Unit  Commitment Problem are presented. A comparison with state-of-the-art techniques is considered to show the effectiveness of the proposed variants of the algorithm.

\end{abstract}

\section{Definitions}
\label{Defs}

Throughout the paper, we consider an Integer Linear Program (ILP) of the form

\begin{equation}
\label{ILP_orb}
\min \bigg\{ c(x) \; | \; x \in \mathcal{X} \bigg\}, \mbox{ with } \mathcal{X} \subseteq \mathcal{P}(m,n) \mbox{ and } c : \mathcal{P}(m,n)  \rightarrow \mathbb{R}
\end{equation}
where $\mathcal{P}(m,n)$ is the set of $m \times n$ binary matrices.
A symmetry is defined as a permutation $\pi$ of the columns $\{1, ..., n \}$ such that for any solution matrix $x \in \mathcal{X} $, matrix $\pi(x)$ is also solution and has same cost, {\it i.e.}, $\pi(x) \in \mathcal{X} $ and  $c(x) = c(\pi(x))$. The \textit{symmetry group} $\mathcal{G}$ of ILP (\ref{ILP_orb}) is the set of all such permutations.
Symmetry group $\mathcal{G}$ partitions the solution set $\mathcal{X} $ into \textit{orbits}, {\it i.e.}, two matrices are in the same orbit if there exists a permutation in $\mathcal{G}$ sending one to the other.

Symmetries arising in ILP can impair the solution process, in particular when symmetric solutions lead to an excessively large branch and bound (B\&B) search tree (see survey \cite{Margot10}).  Symmetry detection techniques are proposed in \cite{Liberti12,Berthold09}.
Various techniques, so called {\it symmetry-breaking techniques}, are available to handle symmetries in ILP of the form (\ref{ILP_orb}).
The general idea is, in each orbit, to pick one solution, defined as the \textit{representative}, and then restrict the solution set to the set of all representatives.

A technique is said to be {\it full-symmetry-breaking} (resp. {\it partial-symmetry-breaking}) if the solution set is exactly (resp. partially) restricted to the representative set.
Moreover, such a technique may introduce some specific branching rules that interfere with the B\&B search. This can forbid exploiting a user-defined branching rule or, even, the default solver branching settings.
A symmetry-breaking technique is said to be \textit{flexible} if at any node of the B\&B tree, the branching rule can be derived from any linear inequality on the variables. 

Such a technique can be based on specific branching and pruning rules during the B\&B search \cite{Margot03,Ostrowski11}, as well as on symmetry-breaking inequalities \cite{Friedman07,Liberti12,Liberti14} possibly derived from the study of the symmetry-breaking polytope \cite{Hojny17}. Techniques based on symmetry-breaking inequalities are flexible, since they do not rely on a particular B\&B search, and can be full or partial-symmetry-breaking.
Efficient full-symmetry-breaking techniques are usually based on the pruning of the B\&B tree (see survey \cite{Margot10} and computational study \cite{Pfetsch15}) and may also be flexible.

In this article, we focus on a particular symmetry group, the {\it symmetric group} $\mathfrak{S}_n$, which is the group of all column permutations. The most common choice of representative is based on the lexicographical order.
Column $y \in \{0,1\}^m$ is said to be \textit{lexicographically greater than} column $z \in \{0,1\}^m$ if there exists $i \in \{1, ...., m-1\}$ such that
 $\forall i' \leq i$, $y_{i'} = z_{i'}$
and $y_{i+1} > z_{i+1}$, $i.e.$, $y_{i+1} =1$ and $z_{i+1}=0$.
We write $y \succeq z$ if $y$ is equal to $z$ or if $y$ is lexicographically greater than $z$. A matrix $x \in \mathcal{P}(m,n)$ is chosen to be the representative of its orbit if its columns $x(1)$, ..., $x(n)$ are lexicographically non-increasing, $i.e.$, for all $j < n$, $x(j) \succeq x(j+1)$.

The convex hull of all $m \times n$ binary matrices with lexicographically non-increasing columns is called a \textit{full orbitope} and is denoted by $\mathcal{P}_0(m,n)$. 
The solution set $\mathcal{X}$ of ILP (\ref{ILP_orb}) restricted to the set of representatives is then  $\mathcal{P}_0(m,n) \cap \mathcal{X} $.

A complete linear description of the full orbitope is given in \cite{Kaibel10} as an $O(mn^3)$ extended formulation. It is constructed by combining extended formulations of simpler polyhedra. To the best of our knowledge, it has never been used in practice to handle symmetries.  Loos \cite{Loos11} studies orbisacks, $i.e.$, full orbitopes with $n=2$, in an attempt to derive a linear description of the full orbitope in the space of the natural $x$ variables.
Complete linear descriptions of orbisacks, as well as extended formulations, are detailed in \cite{Loos11}.
However, no complete linear description of the full orbitope $\mathcal{P}_0(m,n)$ is known in the $x$ space, and computer experiments conducted in \cite{Loos11} indicate that its facet defining inequalities are extremely complicated.

Special cases of full orbitopes are \textit{packing} and \textit{partitioning orbitopes}, which are restrictions to matrices with at most (resp. exactly) one 1-entry in each row. 
If all matrices in $\mathcal{X} $ have at most (resp. exactly) one 1-entry in each row, then the solution set can be restricted to a packing (resp. partitioning) orbitope.
A complete linear description of these polytopes is given in \cite{Kaibel08}, alongside with a polynomial time separation algorithm.
From this linear description, a symmetry-breaking algorithm, called \textit{orbitopal fixing}, is derived in \cite{Kaibel10} in order to consider only the solutions included in the packing (resp. partitioning) orbitope during the B\&B search. It is worth noting that orbitopal fixing is flexible, full-symmetry-breaking and does not introduce any additional inequalities. These key features make orbitopal fixing for packing and partitioning orbitopes particularly efficient.

In this article, we propose an orbitopal fixing algorithm for the full orbitope. In the case of an arbitrary symmetry group, a fixing algorithm can be used to break symmetries during the B\&B search, such as isomorphism pruning \cite{Margot01}, orbital branching \cite{Ostrowski11} or strict setting \cite{Margot03,OstrowPhD}.
When the solution set can be restricted to the full orbitope, the authors in \cite{Ostrowski15} introduce \textit{modified orbital branching} (MOB) which is an efficient partial-symmetry-breaking technique.
In SCIP 5.0 \cite{SCIP} a heuristic similar to MOB takes into account some symmetries related to the full symmetric group.

There are many problems whose symmetry group is the symmetric group acting on the columns, or on a subset of the columns, but whose solution space cannot be restricted to a partitioning or a packing orbitope.
Examples range from line planning problems in public transports \cite{Borndorfer07} to scheduling problems with a discrete time horizon, like the Unit Commitment Problem. The Min-up/min-down Unit Commitment Problem (MUCP) is to find a minimum-cost power production plan on a discrete time horizon for a set of production units. At each time period, the total production has to meet a forecast demand. Each unit must satisfy minimum up-time and down-time constraints besides featuring production and start-up costs. In practical instances, there are several sets of identical units. Assuming a solution is expressed as a matrix where column $j$ corresponds to the up/down trajectory of unit $j$ over the time horizon, then any permutation of columns corresponding to identical units leads to another solution with same cost. In this case, the columns are partitioned into $h$ subsets $\mathcal{N}_1$, ..., $\mathcal{N}_h$, and each subset $\mathcal{N}_k$ contains $n_k$ columns, corresponding to $n_k$ identical units.
The corresponding symmetry group is a product of symmetric groups $\mathfrak{S}_{n_1} \times \mathfrak{S}_{n_2} \times ... \times \mathfrak{S}_{n_h}$, such that $\mathfrak{S}_{n_k}$ is operating on column subset $\mathcal{N}_k$.
In this article we focus on these symmetries which are a priori known. No detection techniques are therefore used.
In \cite{Ostrowski15}, the MOB technique is used on MUCP instances with additional technical constraints.

The orbitopal fixing algorithm for the full orbitope proposed in this article handles the symmetries related to the symmetric group arising in the aforementioned problems. 
This is a flexible full-symmetry-breaking technique which is computationally efficient. 
We generalize symmetries and full orbitopes to a given set of matrix subsets, thus introducing sub-symmetries and sub-orbitopes. Such subsets appear in particular as underlying subproblems of a B\&B search. 
The main motivation to look at sub-symmetries is that they are often undetected in the symmetry group $\mathcal{G}$ of the problem. We extend our orbitopal fixing algorithm to break sub-symmetries arising in sub-symmetric groups.

The paper is organized as follows.
Section \ref{sec:soa} introduces state-of-the-art techniques to handle symmetries in the B\&B tree when the solution set $\mathcal{X}$ is a set of binary matrices, and the symmetry group $\mathcal{G}$ is the symmetric group acting on the columns. 
In Section \ref{sec:intersection}, we characterize the smallest cube face containing the binary intersection of the full orbitope $\mathcal{P}_0(m,n)$ with any face of the $(m,n)$-dimensional 0/1 cube.
In Section \ref{sec:sub} we introduce sub-symmetries and study conditions guaranteeing that at least one optimal solution is preserved by sub-symmetry-breaking techniques. When considering a set of sub-symmetric groups, the lexicographical order qualifies, thus leading to the definition of full sub-orbitope.
In Section \ref{sec:OF} we describe two variants of a linear time 
orbitopal fixing algorithm for the full (sub)-orbitope. The first variant is referred to as static, as it is defined for the natural lexicographical order. The second variant is referred to as dynamic, as the lexicographical order follows the branching decisions occurring along the B\&B search. 
Finally, in Section \ref{sec:MUCP}, numerical experiments are performed on MUCP instances featuring identical production units. A comparison with state-of-the-art symmetry-breaking techniques (Cplex and MOB) is presented in order to show
the effectiveness of our approach.

\section{Handling symmetries in the B\&B tree}
\label{sec:soa}

At some point in the B\&B tree, there will be variables whose values are fixed as a result of the previous branching decisions.
Given a node $a$ of the B\&B tree, let $B^a_1$ (resp. $B^a_0$) be the set of indices of variables fixed to 1 (resp. 0) at node $a$.
Pruning strategies can be constructed using dedicated branching rules or variable fixing algorithms, which will take symmetries into account in order to avoid exploring some symmetric solutions in the B\&B tree.

\subsection{Isomorphism pruning and pruning with branching rules}
\label{sec:MOB-rules}

For a general symmetry group $\mathcal{G}$, Margot introduces in \cite{Margot01} isomorphism pruning, which is to prune any node $a$ in the B\&B tree such that all solutions to subproblem $a$ are not representatives. This can be done provided specific choices of branching variables are made throughout the B\&B tree, thus restricting flexibility. In \cite{Margot03}, a more flexible branching rule for isomorphism pruning is defined.
In \cite{Ostrowski11,Ostrowski08}, a flexible partial-symmetry-breaking technique called  
orbital branching (OB) is introduced.
It is possible to couple isomorphism pruning or orbital branching with an additional variable setting algorithm \cite{Margot03,OstrowPhD}. This procedure sets to 0 (resp. 1) the variables which, at a given node, are symmetric to a variable already set to 0 (resp. 1).

When handling only all column permutation symmetries, 
modified orbital branching (MOB) has been introduced in \cite{Ostrowski15} as a variant of orbital branching in order to produce more balanced B\&B trees. 

At each node $a$ of the B\&B tree, modified orbital branching \cite{Ostrowski15} is to branch on a disjunction that fixes a larger number of variables than the classical disjunction $x_{i,j} = 0 \; \vee \; x_{i,j} = 1$ does. For a given variable $x_{i,j}$ selected for branching at node $a$, consider the set $O_{i,j}(a)$ of all variables which could permute values with $x_{i,j}$ at node $a$: $O_{i,j}(a)$ is defined as the set of all variables $x_{i,j'}$, $j' \leq n$, such that for all $i' \leq m$, $x_{i',j'}$ and $x_{i',j}$ are either fixed to the same value ($i.e.$, $\{(i',j), \;(i',j')\} \subset B^a_1$ or $\{(i',j), \;(i',j')\} \subset B^a_0$) or free ($i.e.$, $\{(i',j), \;(i',j')\} \subset \{1, ..., m \} \times \{1, ...., n \} \backslash  (B^a_0 \cup B^a_1)$). 
Thus, for a given variable orbit $O_{i,j}(a) = \{ x_{i,j_1}, x_{i,j_2}, ..., x_{i,j_k} \}$ at node $a$, and for a given $\alpha \in \mathbb{N}$, modified orbital branching is to branch on the following disjunction:
$$
 x_{i, j_\ell} = 1, \; \forall \ell \in \{1, ..., \alpha \} \quad \vee \quad  x_{i, j_\ell} = 0, \; \forall \ell \in \{\alpha, ...,k \}
$$
 
Modified orbital branching is a partial-symmetry-breaking technique. In the case when the symmetry group is $\mathfrak{S}_n$,
Ostrowski \textit{et al.} \cite{Ostrowski15} show that modified orbital branching can be enforced to a full-symmetry breaking technique.
To this end, one need to apply an additional branching rule restricting the variable orbits which can be branched on at each node.
Namely, the authors introduce the minimum row-index (MI) branching rule, stating that variable $x_{i,j}$ is eligible for branching if and only if for all rows $i' < i$, variables $x_{i',j}$ have already been fixed.
They prove that modified orbital branching alongside with MI branching rule is full-symmetry-breaking.
As the MI rule makes MOB non-flexible, they also propose some relaxed rules for which the full-symmetry-breaking property still holds. In particular, a more flexible branching rule is the \textit{relaxed minimum-rank index} (RMRI).
For each MOB variant, the full-symmetry-breaking property is obtained at the expense of flexibility.

Isomorphism pruning and MOB have similar actions on the B\&B tree. Thus only MOB is considered in the following, as it is dedicated to symmetries arising from the symmetric group

\subsection{Pruning with variable fixing}

Let $C^{(m,n)}$ be the $(m,n)$-dimensional 0/1-cube.
Given an ILP of the form (\ref{ILP_orb}), consider a polytope $P \subset C^{(m,n)}$ such that the solution set of (\ref{ILP_orb}) is a subset of $P$. At a given node $a$ of the B\&B tree, some variables have been already fixed by previous branching decisions. Additional variable fixings can be performed on some of the remaining free variables. The idea is to fix to 0 (resp. 1) variables that would yield a solution outside $P$ if fixed to 1 (resp. 0).
Variable fixing methods, introduced in \cite{Kaibel08}, are presented as follows.

A non-empty face $F$ of $C^{(m,n)}$ is given by two index sets $I_0$, $I_1 \subset \{1, ...., m\} \times \{1, ..., n \}$ such that
$$
F = \{ x \in C^{(m,n)} \; | \; x_{i,j} = 0 \; \forall (i,j) \in I_0 \mbox{ and }  x_{i,j} = 1 \; \forall (i,j) \in I_1 \}.
$$

For a polytope $P \subset C^{(m,n)}$ and a face $F$ of $C^{(m,n)}$ defined by $(I_0, I_1)$, the smallest face of $C^{(m,n)}$ that contains $P \cap F \cap \{0,1\}^{(m,n)}$ is denoted by $\mbox{Fix}_F(P)$, $i.e.$, $\mbox{Fix}_F(P)$ is the intersection of all faces of $C^{(m,n)}$ that contain $P \cap F \cap \{0,1\}^{(m,n)}$.
If $\mbox{Fix}_F(P)$ is a non-empty face of $C^{(m,n)}$, the index sets defining it will be denoted by $I_0^{\star}$ and $I_1^{\star}$. As pointed out in \cite{Kaibel07}, the following result can be directly derived from the definition of $\mbox{Fix}_F(P)$.

\begin{lemma}
\label{fixing}
If $\mbox{Fix}_F(P)$ is a non-empty face, then $\mbox{Fix}_F(P)$ is given by sets $I_0^{\star}$ and $I_1^{\star}$ such that
$$
I_0^{\star} = \bigg\{ (i,j) \; | \; x_{i,j} = 0 \; \;  \forall x \in P \cap F \cap \{0,1\}^{(m,n)} \bigg\}
$$
$$
I_1^{\star} = \bigg\{ (i,j) \; | \; x_{i,j} = 1 \;\;  \forall x \in P \cap F \cap \{0,1\}^{(m,n)} \bigg\}
$$
\end{lemma}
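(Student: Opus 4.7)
The proof is essentially unpacking definitions, so I would keep it short. Let me write $S := P \cap F \cap \{0,1\}^{(m,n)}$ for convenience, and let $F^{\star}$ denote the face of $C^{(m,n)}$ defined by the index sets $I_0^{\star}, I_1^{\star}$ as given in the statement. The goal is to show $\mathrm{Fix}_F(P) = F^{\star}$.

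First I would verify that $F^{\star}$ contains $S$. This is immediate from the very definition of $I_0^{\star}$ and $I_1^{\star}$: any $x \in S$ satisfies $x_{i,j}=0$ for every $(i,j) \in I_0^{\star}$ and $x_{i,j}=1$ for every $(i,j) \in I_1^{\star}$, so $x \in F^{\star}$. Hence $F^{\star}$ is one of the faces of $C^{(m,n)}$ containing $S$, and therefore, by definition of $\mathrm{Fix}_F(P)$ as the intersection of all such faces, $\mathrm{Fix}_F(P) \subseteq F^{\star}$.

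For the reverse inclusion, I would show that $F^{\star}$ is contained in every face of $C^{(m,n)}$ that contains $S$. Let $F'$ be any such face, given by index sets $(I_0', I_1')$. For any $(i,j) \in I_0'$, every $x \in S \subseteq F'$ satisfies $x_{i,j}=0$, which by definition of $I_0^{\star}$ puts $(i,j) \in I_0^{\star}$; similarly $I_1' \subseteq I_1^{\star}$. Consequently the defining constraints of $F'$ are a subset of those of $F^{\star}$, so $F^{\star} \subseteq F'$. Intersecting over all such $F'$ yields $F^{\star} \subseteq \mathrm{Fix}_F(P)$, completing the equality.

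There is no real obstacle here — the only thing to be careful about is the direction of the set containment when comparing defining index sets against face inclusion (smaller defining sets give larger faces). Since the hypothesis $\mathrm{Fix}_F(P) \neq \emptyset$ guarantees that $S$ is nonempty and that the two set-theoretic descriptions of $I_0^{\star}, I_1^{\star}$ are disjoint (otherwise some $x_{i,j}$ would be simultaneously $0$ and $1$), no additional case analysis is needed.
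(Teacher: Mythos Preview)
Your proof is correct and matches the paper's approach: the paper does not spell out an argument at all, merely remarking that the result ``can be directly derived from the definition of $\mbox{Fix}_F(P)$,'' and your two-inclusion argument is precisely that direct derivation. Your closing observation that nonemptiness of $\mbox{Fix}_F(P)$ forces $S\neq\varnothing$ (and hence disjointness of $I_0^{\star}$ and $I_1^{\star}$) is the only subtlety, and you handle it correctly.
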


When solving ILP (\ref{ILP_orb}) by B\&B, to each node $a$ corresponds a face $F(a)$ of $C^{(m,n)}$ defined by $B^a_0$ and $B^a_1$.
The aim of variable fixing is then to find, at each node $a$, sets $I_0^{\star}$ and $I_1^{\star}$ defining $\mbox{Fix}_{F(a)}(P)$, where $P$ is a given polytope containing the solution set. 
From Lemma \ref{fixing}, there are two cases. If $\mbox{Fix}_{F(a)}(P) = \varnothing$ then $P \cap F(a) \cap \{0,1\}^{(m,n)}$ is empty as well and node $a$ can be pruned. If $\mbox{Fix}_{F(a)}(P) \not= \varnothing$, then, any 
 free variable in $I_0^{\star}$ (resp. $I_1^{\star}$) can be set to 0 (resp. 1).
 
From Lemma \ref{fixing}, any variable $x_{i,j}$ such that $
(i,j) \not\in I_0^{\star} \cup I_1^{\star}$ cannot be fixed, as it takes either value 0 or 1 in solution subset $P \cap F(a) \cap \{0,1\}^{(m,n)}$. It proves that the fixings occur as early as possible in the B\&B tree.

In general, the problem of computing $\mbox{Fix}_F(P)$ is NP-hard. However, if one can optimize a linear function over $P \cap \{0,1\}^{(m,n)}$ in polynomial time, the fixing ($I_0^{\star}$, $I_1^{\star}$) of the face defined by given index subsets $(I_0, I_1)$ can be computed in polynomial time \cite{Kaibel07} by solving $2(mn - |I_0| - |I_1|)$ many linear optimization problems over $P \cap \{0,1\}^{(m,n)}$.

\paragraph{}

Orbitopal fixing is variable fixing with polytope $P$ being an orbitope.
It corresponds to the case when
 the solution set $\mathcal{X}$ of ILP (\ref{ILP_orb}) is restricted to an orbitope $P$. The resulting solution set $\mathcal{X} \cap P$ is trivially included in $P$.
Then variable fixing can be performed in order to restrict the solution set at each node $a$ to be included in orbitope $P$.

In \cite{Kaibel07}, the authors characterize the sets $I_0^{\star}$ and $I_1^{\star}$ defining $\mbox{Fix}_F(P)$ where $P$ is the partitioning (or packing) orbitope and where $F$ is defined by $(B^a_0, B^a_1)$, at a given node $a$ of the B\&B tree.
They derive a linear time orbitopal fixing algorithm performed at each node of the B\&B tree, breaking all orbitopal symmetries from packing and partitioning formulations. 

Orbitopal fixing is a full-symmetry-breaking technique which is also flexible, as fixing does not interfere with branching.
Note also that no additional inequalities are required, thus it does not increase the size of the LP solved at each node.

In the next two sections, we devise a linear time orbitopal fixing algorithm for the full orbitope.

\section{Intersection with the full orbitope}
\label{sec:intersection}

For convenience, the full orbitope $\mathcal{P}_0(m,n)$ is denoted by $P_O$ in this section.
Given a face $F$ of $[0,1]^{(m,n)}$ defined by sets $(I_0, I_1)$,
we will characterize the sets $I_0^{\star}$ and $I_1^{\star}$ defining the fixing $\mbox{Fix}_F(P_O)$ of the full orbitope at $F$. 
Note that face $F$ can be chosen arbitrarily. 

We first define $F(P_O)$-minimality and $F(P_O)$-maximality, which are key properties for matrices. Namely we will see that each column $j$ of an $F(P_O)$-minimal (resp. $F(P_O)$-maximal) matrix is the lexicographically lowest (resp. greatest) possible $j^{th}$ column of any binary matrix $X \in P_O \cap F \cap \{0,1\}^{(m,n)}$.

For any matrix $X$, the $j^{th}$ column of $X$ is denoted by $X(j)$ and the entry at row $i$, column $j$ by $X(i,j)$ .

\begin{definition}
For a given face $F$ of $[0,1]^{(m,n)}$, a matrix $X$ is said to be $F(P_O)$-minimal (resp. $F(P_O)$-maximal) if $X \in P_O \cap F \cap \{0,1\}^{(m,n)}$ and for any matrix $Y \in P_O \cap F \cap \{0,1\}^{(m,n)}$, $X(j)$ is lexicographically less (resp. greater) than or equal to $Y(j)$, $\forall j \in \{ 1, ..., n \}$, $i.e.$, $X(j) \preceq Y(j)$ (resp. $X(j) \succeq Y(j)$) $\forall j \in \{ 1, ..., n \}$.
\end{definition}
The section is organized as follows.

\begin{itemize}
\item[] \begin{itemize}
    \item[1.] Two sequences of matrices $(\underline{{M}}^j)_{j \in \{1, ..., n \}}$ and $(\overline{{M}}^j)_{j \in \{1, ..., n \}}$ are introduced, such that
    matrices $\underline{{M}}^1$ and $\overline{{M}}^n$ will respectively be $F(P_O)$-minimal and $F(P_O)$-maximal.
    \item[2.] Sets $I_1^{\star}$ and $I_0^{\star}$ are determined from $\underline{{M}}^1$ and $\overline{{M}}^n$.
    
\end{itemize}
\end{itemize}

We now introduce some definitions.
Some matrices considered in this section are partial matrices in the sense that their entries can take values in the set $\{0, 1, \times\}$, where $\times$ represents a free variable.
A given partial matrix $M$ of size $(m,n)$ is fully given by the pair $(S_0, S_1)$ of index subsets such that the indices corresponding to a 0-entry in $M$ are in subset $S_0$ and the indices corresponding to a 1-entry in $M$ are in subset $S_1$. The remaining indices $\{1, ..., m \} \times \{1, ...., n \} \backslash (S_0 \cup S_1)$ correspond to free variables in $M$.

For a given column $j \in \{1, ..., n-1\}$, the following definitions are useful to compare columns $j$ and $j+1$ of matrix $M$.

\begin{definition} \textcolor{white}{b}

\begin{itemize}
\item[$\bullet$] 
A row $i \in \{ 1, ..., m\}$ is said to be ${j}$-\textit{fixed}, for a given $j <n$, if $M(i,j) \not= \times$ and $M(i,j+1) \not= \times$ and $M(i,j) \not= M(i,j+1)$. 


Let $i_f(M,j)$ be the smallest ${j}$-fixed row in $\{ 1, ..., m\}$, if such a row exists, and $m+1$ otherwise.

\item[$\bullet$] 
A row $i$ is said to be ${j}$-\textit{discriminating}, for a given $j <n$, if $M(i, j) \not= 0$ and $ M(i, j+1) \not= 1$. 
 
 Let $i_d(M,j)$ be the largest ${j}$-discriminating row in $\{ 1, ..., i_f(M,j)\}$ if such a row exists, and 0 otherwise.
\end{itemize}
\end{definition}


\begin{example}
\label{exa}
To illustrate, consider matrix $M'$ defined by pair $(S'_0, S'_1)$, with
$S'_0 = \{ (4,1),  (3,2),  (5,2)\}$ and $S'_1 = \{(2,1), (5,1), (4,2), (1, 3), (2, 3)\}$:
\begin{center}
$M' =$ $\begin{bmatrix} 
\times & \times & 1\\
1 & \times & 1 \\
\times & 0 & \times  \\
0  & 1 & \times  \\
1 & 0& \times 
\end{bmatrix}$
\end{center}
Only rows 4 and 5 are 1-fixed. 
Hence $i_f(M',1)=4$. 
There is no $2$-fixed row, so $i_f(M',2)=6$.
Rows 1, 2, 3 and 5 are 1-discriminating, hence $i_d(M',1) = 3$. Only row 4 is 2-discriminating then $i_d(M',2) = 4$.
\end{example}

\subsection{Matrix sequences $(\underline{{M}}^j)_{j \in \{1, ..., n \}}$ and $(\overline{{M}}^j)_{j \in \{1, ..., n \}}$}

We propose an algorithm constructing a sequence of matrices $(\underline{{M}}^j)_{j \in \{1, ..., n \}}$ (resp. $(\overline{{M}}^j)_{j \in \{1, ..., n \}}$) of size $(m,n)$. For each $j$, matrix $\underline{{M}}^j$ (resp. $\overline{{M}}^j$) will be derived from pair  $(\underline{S}^j_0, \underline{S}^j_1)$ (resp. $(\overline{S}^j_0, \overline{S}^j_1)$).
Matrices $\underline{{M}}^1$ and $\overline{{M}}^n$ will respectively be $F(P_O)$-minimal and $F(P_O)$-maximal if $\mbox{Fix}_F(P_O)$ is non-empty. Otherwise, they will be arbitrarily defined by the sets
$S^{\varnothing}_0 = \{ (1, 1) \}$, 
$S^{\varnothing}_1  = \{ 1, ...., m \} \times \{ 1, ..., n \} \backslash S^{\varnothing}_0 $.

\begin{algorithm}

\caption{Construction of sequence $(\underline{{M}}^j)_{j \in \{1, ..., n \}}$ defined by pair $(\underline{S}^j_0, \underline{S}^j_1)_{j \in \{1, ..., n \}}$}
\begin{algorithmic} 
\normalsize
 \STATE $j \leftarrow n$. 
 
 \STATE $\underline{S}^n_1 \leftarrow I_1 $
\STATE $\underline{S}^n_0 \leftarrow \{ (i,n) \not\in I_1 \} \cup I_0 $

 \FOR{$j=n-1$ to 1}
    \STATE  $i_f \leftarrow i_f(\underline{M}^{j+1},j)$
   
    \IF{$i_f = m+1$}
        \STATE $
\underline{S}^j_1 \leftarrow \underline{S}^{j+1}_1 \cup \bigg\{ (i,j) \not\in \underline{S}^{j+1}_0 \; | \; (i, j+1) \in \underline{S}^{j+1}_1 \bigg\}
$

\STATE $
\underline{S}^j_0 \leftarrow \underline{S}^{j+1}_0 \cup \bigg\{ (i,j) \not\in  \underline{S}^{j+1}_1 \; | \; (i,j+1) \in \underline{S}^{j+1}_0 \bigg\}
$

    \ELSIF{there is no $j$-discriminating row $i \in \{ 1, ..., i_f \}$ in matrix $\underline{{M}}^{j+1}$}
    
        \STATE $(\underline{S}^{j'}_0, \underline{S}^{j'}_1) \leftarrow (S^{\varnothing}_0,S^{\varnothing}_1)$, $ \forall j' \leq j$
    \ELSE
        \STATE
$
i_{d} \leftarrow i_d(\underline{{M}}^{j+1}, j)
$

\STATE 
$
\underline{S}^j_1 \leftarrow \underline{S}^{j+1}_1 \cup \{ (i_{ld}, j) \} \cup \bigg\{ (i,j) \not\in \underline{S}^{j+1}_0  \; | \; (i, j+1) \in \underline{S}^{j+1}_1  \mbox{ and } i < i_{ld} \bigg\}
$

$
\underline{S}^j_0 \leftarrow \underline{S}^{j+1}_0 \cup \bigg\{  (i,j) \not\in \underline{S}^j_1 \bigg\}.
$

    \ENDIF

 \ENDFOR
\end{algorithmic}
\label{algo}
\end{algorithm}

The key idea for the construction of matrix sequence $(\underline{{M}}^j)_{j \in \{1, ..., n \}}$ is the following. 
For $j=n$, matrix $\underline{{M}}^{n}$ is defined by pair $(I_0, I_1)$, except that each free variable in column $n$ is set to 0.
For each $j<n$, matrix $\underline{{M}}^{j}$ is defined to be equal to matrix $\underline{{M}}^{j+1}$, except that
free variables in column $\underline{{M}}^{j+1}(j)$ are set to 0 or 1 in matrix $\underline{{M}}^{j}$. This is done by propagating values from column $j+1$, so that column $j$ is minimum among all columns greater than or equal to column $j+1$. Note that in matrix $\underline{{M}}^{j}$, there are no remaining free variables in columns $\{j, ..., n\}$.


The construction of sequence $(\underline{{M}}^j)_{j \in \{1, ..., n \}}$ is given in Algorithm \ref{algo}.
For $j=n$, matrix $\underline{{M}}^{n}$ is defined by pair $(I_0 \cup \{ (i,n) \not\in I_1 \}, I_1)$. For $j<n$, if $i_f(\underline{M}^{j+1},j) = m+1$ then each free variable in column $\underline{{M}}^j(j)$ is set such that columns $j$ and $j+1$ are equal. Otherwise, there are two cases. In the first case, $i_f(\underline{M}^{j+1},j) \leq m$ and there is no $j$-discriminating row $i \in \{ 1, ..., i_f \}$ in matrix $\underline{{M}}^{j+1}$. Then for all $j' \leq j$, $(\underline{S}^{j'}_0, \underline{S}^{j'}_1)$ is set to $(S^{\varnothing}_0,S^{\varnothing}_1)$.
In the second case, $i_f(\underline{M}^{j+1},j) \leq m$ and there exists a $j$-discriminating row $i \in \{ 1, ..., i_f \}$ in matrix $\underline{{M}}^{j+1}$. Let row $i_{d} = i_d(\underline{{M}}^{j+1}, j)$. Free variables in column $\underline{{M}}^j(j)$ are set such that columns $j$ and $j+1$ are equal from row 1 to row $i_{d} -1$, and such that row $i_{d}$ has the form $[1, \; 0 ]$ on columns $j$ and $j+1$.
Every other free variable in column $j$ is set to 0.

As the definition of sequence $(\overline{{M}}^j)_{j \in \{1, ..., n \}}$ is very similar, the corresponding algorithm is omitted.
For $j=1$, matrix $\overline{{M}}^1$ is defined by pair $(I_0 , I_1 \cup \{ (i,1) \not\in I_0 \})$.
For $j>1$, free variables in column $\overline{{M}}^{j-1}(j)$ are set to 0 or 1 in matrix $\overline{{M}}^{j}$ by propagating values from column $j-1$, so that column $j$ is maximum among all columns less than or equal to column $j-1$. 

Referring to $(S'_0, S'_1)$ defined in Example \ref{exa} alongside with matrix $M'$, corresponding matrix sequence $(\underline{{M}}^k)_{k\in \{1,2,3\}}$ is as follows.
\begin{center}
$\underline{{M}}^3 =$ $\begin{bmatrix} 
\times & \times & 1\\
1 & \times & 1 \\
\times & 0 & 0 \\
0  & 1 & 0  \\
1 & 0 & 0 
\end{bmatrix}$,
\quad
$\underline{{M}}^2 =$ $\begin{bmatrix} 
\times & 1 & 1\\
1 & 1 & 1 \\
\times & 0 & 0 \\
0  & 1 & 0  \\
1 & 0 & 0 
\end{bmatrix}$,
\quad
$\underline{{M}}^1 =$ $\begin{bmatrix} 
1 & 1 & 1\\
1 & 1 & 1 \\
1 & 0 & 0 \\
0  & 1 & 0  \\
1 & 0& 0 
\end{bmatrix}$.
\end{center}

The first 2-fixed row in matrix $\underline{{M}}^3$ is row 4. Row 4 is also the last 2-discriminating row in matrix $\underline{{M}}^3$ before row 4. Thus $i_f(M',2)=4$, $i_d(M',2) = 4$ and variables (1,2) and (2,2) in matrix $\underline{{M}}^2$ are set to be equal the corresponding values in column $\underline{{M}}^3(2)$. 
The first 1-fixed row in matrix $\underline{{M}}^2$ is row 4. The last 1-discriminating row before row 4 in matrix $\underline{{M}}^2$ is row $i_{d}(\underline{{M}}^2, 1)=3$. Since $i_{d}(\underline{{M}}^2, 1)=3$, entries (1,1) and (3,1) are set to 1 in matrix $\underline{{M}}^1$.
Matrix sequence $(\overline{{M}}^k)_{k\in \{1,2,3\}}$ is obtained similarly. Finally, for any matrix $X$ in the face defined by $(S'_0, S'_1)$, Theorem \ref{thm:sequence} shows that the following inequalities hold column-wise:

\begin{center}
$\underline{{M}}^1 =$ $\begin{bmatrix} 
1 & 1 & 1\\
1 & 1 & 1 \\
1 & 0 & 0 \\
0  & 1 & 0  \\
1 & 0& 0 
\end{bmatrix}$
\quad
$\preceq$
\quad
$X$
\quad
$\preceq$
\quad
$\overline{{M}}^3 =$ $\begin{bmatrix} 
1 & 1 & 1\\
1 & 1 & 1 \\
1 & 0 & 0  \\
0  & 1 & 1  \\
1 & 0& 0 
\end{bmatrix}$
\end{center}

\begin{theorem}
\label{thm:sequence}

If $(\underline{S}^1_0, \underline{S}^1_1)   = (S^{\varnothing}_0,S^{\varnothing}_1)$ or 
 $ (\overline{S}^n_0, \overline{S}^n_1)  = (S^{\varnothing}_0,S^{\varnothing}_1)$ then $\mbox{Fix}_{{F}}(P_O) = \varnothing$.
Otherwise matrix $\underline{{M}}^1$ is ${{F}}(P_O)$-minimal and matrix  $\overline{{M}}^n$ is ${{F}}(P_O)$-maximal.

\end{theorem}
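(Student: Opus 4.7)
The plan is to establish the minimum statement for $\underline{M}^1$; the maximum statement for $\overline{M}^n$ follows by an analogous argument in which columns are processed left-to-right. The main vehicle is a reverse induction on $j$ proving that, for every $Y \in P_O \cap F \cap \{0,1\}^{(m,n)}$, one has $\underline{M}^j(k) \preceq Y(k)$ for all $k \in \{j, j+1, \ldots, n\}$. The base case $j = n$ is immediate because $\underline{M}^n$ satisfies the fixings imposed by $(I_0, I_1)$ and assigns $0$ to every free entry of its last column, so $\underline{M}^n(n) \leq Y(n)$ entrywise and hence lexicographically. In the inductive step the columns strictly to the right of $j$ in $\underline{M}^j$ coincide with those of $\underline{M}^{j+1}$, so only $\underline{M}^j(j) \preceq Y(j)$ needs to be proved.

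The heart of the argument is the following auxiliary fact, which I would prove by a case analysis mirroring Algorithm \ref{algo}: $\underline{M}^j(j)$ is the lexicographically smallest binary column $u$ that respects the $(I_0, I_1)$-fixings in column $j$ and satisfies $u \succeq \underline{M}^{j+1}(j+1)$. Granting this, the induction closes at once because the inductive hypothesis yields $Y(j) \succeq Y(j+1) \succeq \underline{M}^{j+1}(j+1)$ and $Y(j)$ respects the column-$j$ fixings, so $Y(j)$ is a candidate for $u$. When $i_f = m+1$ the column is set entrywise equal to $\underline{M}^{j+1}(j+1)$, trivially the lex-minimum. When a $j$-discriminating row exists in $\{1, \ldots, i_f\}$, for any admissible $u$ the first row $i^*$ where $u$ and $\underline{M}^{j+1}(j+1)$ differ must satisfy $u(i^*) = 1$ and $\underline{M}^{j+1}(i^*, j+1) = 0$, hence $i^*$ is itself $j$-discriminating; by splitting according to whether the $(I_0,I_1)$-fixing at $(i_f, j)$ is $1$ or $0$ (equivalently, whether $i_d = i_f$ or $i_d < i_f$), one deduces $i^* \leq i_d$, and a three-branch comparison of $u$ with $\underline{M}^j(j)$ covering $i^* < i_d$, $i^* = i_d$, and the impossible $i^* > i_d$ yields $\underline{M}^j(j) \preceq u$.

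The remaining branch of the algorithm (in which $i_f \leq m$ but no $j$-discriminating row belongs to $\{1, \ldots, i_f\}$, so $(\underline{S}^{j'}_0, \underline{S}^{j'}_1) = (S^\varnothing_0, S^\varnothing_1)$ for all $j' \leq j$) is shown to imply infeasibility by the same mechanism: any hypothetical $Y$ would supply a column $Y(j)$ with the forced $0$ at $(i_f, j)$ and the $1$ at $(i_f, j+1)$ of $\underline{M}^{j+1}$, yet the absence of a $j$-discriminating row above $i_f$ prevents $Y(j)$ from being lex greater than $\underline{M}^{j+1}(j+1)$, contradicting $Y(j) \succeq Y(j+1) \succeq \underline{M}^{j+1}(j+1)$. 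Hence $P_O \cap F \cap \{0,1\}^{(m,n)} = \varnothing$ in that branch. When no such branch ever triggers, $\underline{M}^1$ has all entries determined, respects $(I_0, I_1)$ by construction, and its columns are lex non-increasing by a telescoping of the auxiliary fact, so $\underline{M}^1 \in P_O \cap F \cap \{0,1\}^{(m,n)}$; the combination of feasibility and column-by-column minimality is precisely the definition of $F(P_O)$-minimality. The main technical obstacle is the row-$i_f$ bookkeeping in the auxiliary fact: the intuition that \emph{any lex jump of an admissible $u$ above $\underline{M}^{j+1}(j+1)$ must occur at a $j$-discriminating row no greater than $i_d$} has to be made rigorous in both of the fixing sub-cases at $i_f$, and this is where most of the case analysis concentrates.
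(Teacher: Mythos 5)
Your proposal is correct and follows essentially the same route as the paper's proof: a reverse induction on $j$ whose inductive step combines $Y(j) \succeq Y(j+1) \succeq \underline{M}^{j+1}(j+1)$ with a case analysis on the branches of Algorithm \ref{algo}, the infeasibility branch being handled exactly as in the paper. Your auxiliary fact (that $\underline{M}^j(j)$ is the lex-smallest column respecting the column-$j$ fixings and dominating $\underline{M}^{j+1}(j+1)$) is a clean repackaging of the paper's direct argument via the first row where $X(j)$ and $\underline{M}^j(j)$ disagree, and you additionally make explicit the verification that $\underline{M}^1$ itself lies in $P_O \cap F \cap \{0,1\}^{(m,n)}$, a point the paper leaves implicit.
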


\begin{proof}
We will prove that if $(\underline{S}^j_0, \underline{S}^j_1)  \not= (S^{\varnothing}_0,S^{\varnothing}_1)$, then, $\forall X \in \mbox{Fix}_{{F}}(P_O)$, $\forall j \in \{1, ..., n\}$,
    $\underline{{M}}^{j}(j) \preceq X(j)$, and otherwise $\mbox{Fix}_{{F}}(P_O) = \varnothing$.
A similar proof can be done to obtain the corresponding result for $(\overline{S}^n_0, \overline{S}^n_1)$ and $\overline{{M}}^j$.
The property is proved by induction on decreasing index value $j \in \{1, ..., n\}$.

For $j=n$, by construction $(\underline{S}^n_0, \underline{S}^n_1)  \not= (S^{\varnothing}_0,S^{\varnothing}_1)$. Since all $(i,n) \not\in I_1$ are set to 0 in matrix $\underline{{M}}^n$, necessarily $\forall X \in P_O \cap {F} \cap \{0,1\}^{(m,n)}$,
    $\underline{{M}}^n(n) \preceq X(n)$.
    
Suppose the induction hypothesis holds for $j+1$, with $j<n$. There are two cases: either $(\underline{S}^j_0, \underline{S}^j_1)  \not= (S^{\varnothing}_0,S^{\varnothing}_1)$ or not.

On the one hand, suppose $(\underline{S}^j_0, \underline{S}^j_1)  \not= (S^{\varnothing}_0,S^{\varnothing}_1)$. Suppose also there exists $X \in P_O \cap {F} \cap \{0,1\}^{(m,n)}$ such that
    $\underline{{M}}^j(j) \succ X(j)$.
    Consider the first row $i$ such that columns $X(j)$ and $\underline{{M}}^j(j)$ are different. As $\underline{{M}}^j(j) \succ X(j)$, we have $X(i,j) = 0$ and $\underline{{M}}^j(i,j)= 1$.
    By construction, since $(i,j) \not\in I_0 \cup I_1$ and $\underline{{M}}^j(i,j)= 1$, for all $i' < i$, $ \underline{{M}}^j(i',j) = \underline{{M}}^j(i',j+1)$.
If $\underline{{M}}^j(i,j+1) = 1$, then since $\underline{{M}}^j(j+1)  = \underline{{M}}^{j+1}(j+1), \underline{{M}}^{j+1}(j+1) \succ X(j)$. 
    By the induction hypothesis, $X(j+1) \succeq \underline{{M}}^{j+1}(j+1)$ thus $X(j+1) \succ X(j)$, which contradicts $X \in P_O$.
Let now suppose $\underline{{M}}^j(i,j+1) = 0$, then, from the construction of $\underline{{M}}^j$, row $i_f = i_f(\underline{M}^{j+1},j)$ in matrix $\underline{{M}}^{j+1}$ has the form $[0, \; 1 ]$ on columns $j$ and $j+1$ (otherwise $\underline{{M}}^j(i,j)$ would have been set to 0).
        In this case, row $i$ corresponds to the last $j$-discriminating row of matrix $\underline{{M}}^{j+1}$ before row $i_f$. Thus, for each $i' \in \{ i+1, i_f-1 \}$ such that $(i', j) \not\in I_0 \cup I_1$, we have $\underline{{M}}^j(i',j+1) = 1$. If for such an $i'$, $X(i',j) = 0$ then since $\underline{{M}}^j(j+1)  = \underline{{M}}^{j+1}(j+1)$, $\underline{{M}}^{j+1}(j+1) \succ X(j)$. Otherwise, as row $i_f$ in matrix $\underline{{M}}^{j+1}$ has the form $[0, \; 1 ]$ on columns $j$ and $j+1$, it follows $(i_f,j) \in I_0$, thus $X(i_f,j)=0$. Consequently 
        $\underline{{M}}^{j+1}(j+1) \succ X(j)$ holds too.
    By the induction hypothesis, $X(j+1) \succeq \underline{{M}}^{j+1}(j+1)$ thus we reach the same contradiction.
    
On the other hand, suppose $(\underline{S}^j_0, \underline{S}^j_1) = (S^{\varnothing}_0,S^{\varnothing}_1)$, consider the following two cases:
 If $(\underline{S}^{j+1}_0, \underline{S}^{j+1}_1) = (S^{\varnothing}_0,S^{\varnothing}_1)$ then by the induction hypothesis, $\mbox{Fix}_{{F}}(P_O) = \varnothing$.
 Otherwise, $(\underline{S}^{j+1}_0, \underline{S}^{j+1}_1) \not= (S^{\varnothing}_0,S^{\varnothing}_1)$.
Recall $i_f = i_f^{}(\underline{M}^{j+1},j)$.
Then, by construction of matrix $\underline{{M}}^j$, row $i_f$ of matrix $\underline{{M}}^{j+1}$ has the form $[0, \; 1 ]$ on columns $j$ and $j+1$ and there is no row $i \in \{1, ..., i_f - 1 \}$ in matrix $\underline{{M}}^{j+1}$ which is $j$-discriminating.
As column $j+1$ is completely fixed in matrix $\underline{{M}}^{j+1}$, each row $i \in \{ 1, ..., i_f - 1 \}$ of matrix $\underline{{M}}^{j+1}$ has one the following forms on columns $j$ and $j+1$: $[1, \; 1 ]$ or $[0, \; 0 ]$ or $[\times, \; 1 ]$. Therefore, if $\mbox{Fix}_{{F}}(P_O)$ were not empty, then $P_O \cap {F} \cap \{0,1\}^{(m,n)} \not= \varnothing$ and for any  $X \in P_O \cap {F} \cap \{0,1\}^{(m,n)}$,
even if $X(i,j) = 1$ for each $(i,j) \not\in I_0 \cup I_1$, $\underline{{M}}^{j+1}(j+1) \succ X(j)$ would hold. By the induction hypothesis, $X(j+1) \succeq \underline{{M}}^{j+1}(j+1)$ thus $X(j+1) \succ X(j)$, which contradicts $X \in P_O$.
\qed \end{proof}

\subsection{Determining $I_0^{\star}$ and $I_1^{\star}$}
In case $\mbox{Fix}_{{F}}(P_O) \not= \varnothing$, sets $I_0^{\star}$ and $I_1^{\star}$ can be characterized using ${{F}}(P_O)$-minimal and ${{F}}(P_O)$-maximal matrices $\underline{{M}}^{1}$ and $\overline{{M}}^n$ as follows.
For each $j \in \{ 1, ..., m \}$, consider row $i_j$, the first row at which columns  $\underline{{M}}^1(j)$ and $\overline{{M}}^n(j)$ differs, defined as:
$$
i_j = \min \bigg\{ i \in \{1, ..., m\} \; | \;  \underline{{M}}^1(i,j) \not= \overline{{M}}^n(i,j)
 \bigg\}
$$

If columns $\underline{{M}}^1(j)$ and $\overline{{M}}^n(j)$ are equal, then $i_j$ is arbitrarily set to $m+1$.
By definition of ${{F}}(P_O)$-minimal and ${{F}}(P_O)$-maximal matrices, $\underline{{M}}^1(i_j,j) < \overline{{M}}^n(i_j,j)$.
Note that since for all $(i,j) \in I_0$ (resp. $I_1$), $\underline{{M}}^1(i,j) = \overline{{M}}^n(i,j) = 0$ (resp. 1), it follows that
$(i_j,j)$ is a free variable $i.e.$, $(i_j,j) \not\in I_0 \cup I_1$ .

\begin{theorem} 
\label{I0I1}
$\mbox{Fix}_{{F}}(P_O)$, if non-empty, is given by sets
$
I^{\star}_0 = I_0 \cup I^+_0
$
and
$
I^{\star}_1 = I_1 \cup I^+_1
$, where
$$
\begin{array}{ll}
  I_0^+ = \bigg\{ (i,j) \not\in I_0 \cup I_1 \; | \; i < i_j \mbox{ and } \overline{{M}}^n(i,j) = 0 
\bigg\},  \qquad & \qquad I_1^+ = \bigg\{ (i,j) \not\in I_0 \cup I_1  \; | \; 
 i < i_j \mbox{ and } \underline{{M}}^1(i,j) = 1
\bigg\}. \\
\end{array}
$$
\end{theorem}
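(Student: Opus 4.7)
The plan is to apply Lemma~\ref{fixing}, which reduces the theorem to verifying the identity $I_0 \cup I_0^+ = \{(i,j) : X(i,j) = 0 \text{ for all } X \in P_O \cap F \cap \{0,1\}^{(m,n)}\}$ and its analogue for $I_1 \cup I_1^+$. The engine of the proof is the following observation derived from Theorem~\ref{thm:sequence}: for every $X \in P_O \cap F \cap \{0,1\}^{(m,n)}$ and every column $j$, the columns $\underline{M}^1(j)$, $X(j)$, $\overline{M}^n(j)$ coincide on rows $1, \ldots, i_j - 1$. This follows from the sandwich $\underline{M}^1(j) \preceq X(j) \preceq \overline{M}^n(j)$ given by Theorem~\ref{thm:sequence}: if $X$ first disagreed with $\underline{M}^1$ at some row $k < i_j$, the lex comparison at row $k$ would force either $X(j) \succ \overline{M}^n(j)$ or $X(j) \prec \underline{M}^1(j)$, a contradiction.

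From this observation, the inclusions $I_0 \cup I_0^+ \subseteq I_0^\star$ and $I_1 \cup I_1^+ \subseteq I_1^\star$ are immediate: entries in $I_0$ or $I_1$ are forced by the face definition, and for $(i,j) \in I_0^+$ (resp.\ $I_1^+$) the hypotheses $i < i_j$ together with $\overline{M}^n(i,j) = 0$ (resp.\ $\underline{M}^1(i,j) = 1$) yield $X(i,j) = 0$ (resp.\ $1$) for every $X$.

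For the reverse inclusion I would show that $(i,j) \notin I_0 \cup I_0^+$ implies the existence of some $X \in P_O \cap F \cap \{0,1\}^{(m,n)}$ with $X(i,j) = 1$. The trivial cases are $(i,j) \in I_1$ (any $X$ works) and $(i,j) \notin I_0 \cup I_1$ with $\overline{M}^n(i,j) = 1$ (take $X = \overline{M}^n$, which lies in $P_O \cap F$ by Theorem~\ref{thm:sequence}). Only the case $(i,j) \notin I_0 \cup I_1$, $\overline{M}^n(i,j) = 0$, $i > i_j$ remains ($i = i_j$ is excluded since $\overline{M}^n(i_j,j) = 1$). For this case I would construct $X$ by concatenation: columns $1, \ldots, j-1$ inherited from $\overline{M}^n$, column $j$ equal to the vector $v$ obtained from $\underline{M}^1(j)$ by setting entry $i$ to $1$, and columns $j+1, \ldots, n$ inherited from $\underline{M}^1$. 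Each inherited column lies in $F$ since $\overline{M}^n, \underline{M}^1 \in F$, and $v$ differs from $\underline{M}^1(j)$ only at the free entry $(i,j)$, so $v$ also respects the face. The inequality $v \succeq \underline{M}^1(j)$ is immediate; the inequality $v \preceq \overline{M}^n(j)$ uses $i > i_j$, since then $v$ agrees with $\overline{M}^n(j)$ on rows $< i_j$ while $v(i_j) = \underline{M}^1(i_j, j) = 0 < 1 = \overline{M}^n(i_j, j)$. Stringing together the lex chain $\overline{M}^n(j-1) \succeq \overline{M}^n(j) \succeq v \succeq \underline{M}^1(j) \succeq \underline{M}^1(j+1)$ with the $P_O$-memberships of $\underline{M}^1$ and $\overline{M}^n$ then shows $X \in P_O$, and $X(i,j) = 1$ by construction.

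The symmetric argument for $I_1^\star$ takes $v$ to be $\overline{M}^n(j)$ with entry $i$ set to $0$ and is otherwise identical. The one delicate step in either direction is verifying that the flipped column $v$ stays inside the lex interval $[\underline{M}^1(j), \overline{M}^n(j)]$; this is exactly where the hypothesis $i > i_j$ does the work, and is where I would expect the proof to be most at risk of stumbling.
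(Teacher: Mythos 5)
Your proof is correct, and its overall architecture matches the paper's: both establish the two inclusions separately, and both produce the witnesses for the reverse inclusion by gluing a prefix of columns from $\overline{M}^n$, a modified column $j$, and a suffix of columns from $\underline{M}^1$, validated by the same lexicographic chain $\overline{M}^n(j-1)\succeq\overline{M}^n(j)\succeq v\succeq\underline{M}^1(j)\succeq\underline{M}^1(j+1)$. The two genuine differences are both in your favor. For the forward inclusion the paper runs a minimal-counterexample argument on the row index (a topmost entry of $(I_0^+\setminus I_0^\star)\cup(I_1^+\setminus I_1^\star)$ is shown to force a strictly higher such entry, a contradiction), whereas your sandwich observation --- $X(j)$ must coincide with $\underline{M}^1(j)=\overline{M}^n(j)$ on rows $1,\dots,i_j-1$ because it is lexicographically pinned between two columns that agree there --- reaches the same conclusion in one step and is cleaner. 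For the reverse inclusion the paper's witness sets every entry of column $j$ below row $i_j$ uniformly to $0$ (for $X_0$) or $1$ (for $X_1$), which as written can leave the face $F$ when some $(i',j)$ with $i'>i_j$ belongs to $I_1$ (resp.\ $I_0$); your single-bit flip of the free entry $(i,j)$ in $\underline{M}^1(j)$ (resp.\ $\overline{M}^n(j)$) manifestly respects $F$ while the hypothesis $i>i_j$ still delivers the needed inequalities $v\preceq\overline{M}^n(j)$ and $v\succeq\underline{M}^1(j)$, so your construction is the more watertight of the two. The only point worth adding explicitly is that when $i_j=m+1$ the residual case $i>i_j$ is vacuous, which is immediate.
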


\begin{proof}
($\implies$) We prove that $I^+_0 \subset I_0^{\star}$ and $I^+_1 \subset I_1^{\star}$.
Let us suppose the opposite: $I^+_0 \not\subset I_0^{\star}$ or $I^+_1 \not\subset I_1^{\star}$. Let $(i,j) \in (I^+_0 \backslash I_0^{\star}) \cup (I^+_1 \backslash I_1^{\star})$.
Consider $i_0 = \min \{ i' \; | \; (i',j) \in (I^+_0 \backslash I_0^{\star}) \cup (I^+_1 \backslash I_1^{\star} ) \} $.
Suppose $(i_0,j) \in I^+_0 \backslash I_0^{\star}$. The proof is similar if we suppose $(i_0,j) \in I^+_1 \backslash I_1^{\star}$.
As $(i_0,j) \not\in I_0^{\star}$, there exists $X \in P_O \cap {F} \cap \{0,1\}^{(m,n)}$ such that $X(i_0, j) = 1$.
As $(i_0,j) \in I^+_0$, $\overline{{M}}^n(i_0,j) = 0 $.
If for all $i' < i_0$, $X(i',j) \geq \overline{{M}}^n(i',j)$ then the following would hold: $X(j) \succ \overline{{M}}^n(j)$, contradicting the fact that $\overline{{M}}^n$ is ${F}(P_O)$-maximal.
Thus, there exists a row $i_1 < i_0$ such that $\overline{{M}}^n(i_1,j) = 1 $ and $X(i_1,j) = 0$.
Note that as $(i_0,j) \in I^+_0$, $i_0 < i_j$, and thus $i_1 < i_j$, which implies $\underline{{M}}^1(i_1,j) = 1 $.
Thus $(i_1,j) \in I^+_1$. However, $(i_1,j) \not\in I_1^{\star}$ because $X \in P_O \cap {F} \cap \{0,1\}^{(m,n)}$ and $X(i_1,j) = 0$.
The contradiction comes from the fact that $i_1 < i_0$ and $ i_1 \in \{ i' \; | \; (i',j) \in (I^+_0 \backslash I_0^{\star}) \cup (I^+_1 \backslash I_1^{\star} ) \}$.
This proves $I^+_0 \subset I_0^{\star}$ and $I^+_1 \subset I_1^{\star}$, thus $ I_0 \cup I^+_0 \subset I^{\star}_0 $ and $ I_1 \cup I^+_1 \subset I^{\star}_1$.

($\impliedby$) We prove $I^{\star}_0 \subset I_0 \cup I^+_0$ and $I^{\star}_1 \subset I_1 \cup I^+_1$.
It suffices to show that given $(i,j) \not\in I^{\star}_0 \cup I^{\star}_1$, there exists a solution $X_0 \in P_O \cap {F} \cap \{0,1\}^{(m,n)}$ such that $X_0(i, j) = 0$ and a solution $X_1 \in P_O \cap {F} \cap \{0,1\}^{(m,n)}$ such that $X_1(i, j) = 1$.
Consider index $(i_j,j) \not\in I_0 \cup I_1$. 
Solution $\underline{{M}}^1$ is such that $\underline{{M}}^1(i_j,j) = 0$ and solution $\overline{{M}}^n$ is such that $\overline{{M}}^n(i_j,j) = 1$. 
So if $i = i_j$, the result is proved.
Now suppose $i \not= i_j$.
Note that for all $i' < i_j$, $\underline{{M}}^1(i',j) = \overline{{M}}^n(i',j)$, therefore $(i',j) \in I^{\star}_0 \cup I^{\star}_1$.
Thus $i > i_j$.
Consider solutions $X_0$ and $X_1$ defined as follows. For each  $i'\in \{1, ..., m\}$ and $j'\in \{1, ..., n\}$,
$$
X_0(i',j') = \left\{
\begin{array}{ll}
   \overline{{M}}^n(i',j') & \mbox{if } j' < j\\
   \underline{{M}}^1(i',j') & \mbox{if } j' > j\\
   \underline{{M}}^1(i',j') & \mbox{if } j'=j \mbox{ and } i' < i_j \\ 
   1 & \mbox{if } j'=j \mbox{ and } i'=i_j \\
   0 & \mbox{ otherwise}.
\end{array}
\right.
\qquad X_1(i',j') = \left\{
\begin{array}{ll}
   \overline{{M}}^n(i',j') & \mbox{if } j' < j\\
   \underline{{M}}^1(i',j') & \mbox{if } j' > j\\
   \underline{{M}}^1(i',j') & \mbox{if } j'=j \mbox{ and } i' < i_j \\ 
   0 & \mbox{if } j'=j \mbox{ and } i'=i_j \\
   1 & \mbox{ otherwise}.
\end{array}
\right.
$$

Recall that $ \overline{{M}}^n(i_j, j) = 1$ and $\underline{{M}}^1(i_j,j) = 0$, therefore $ \overline{{M}}^n(j) \succeq X_0(j) \succ X_1(j) \succ \underline{{M}}^1(j) $.
As $ \overline{{M}}^n$ and $ \underline{{M}}^1 \in P_O$, $ \overline{{M}}^n(j-1) \succeq \overline{{M}}^n(j)$  and $\underline{{M}}^1(j) \succeq \underline{{M}}^1(j+1)$.
Thus $X_1$ and $X_ 0 $ are also in $P_O \cap {F} \cap \{0,1\}^{(m,n)}$ and are such that  $X_1(i,j) = 1$ and $X_0(i,j) = 0$. This concludes the proof.
\qed \end{proof}

To illustrate, consider matrices $\underline{{M}}^1$ and $\overline{{M}}^3$ from Example \ref{exa}. Here the rows $i_j$ are respectively $i_1=6$, since $\underline{{M}}^1(1)= \overline{{M}}^3(1)$, $i_2=6$ and $i_3= 4$.
The corresponding sets $I_0^+$ and $I_1^+$ are
$
I^+_0 = \{ (3,3) \}
$
 and
$
I^+_1 = \{ (1,1), \; (3,1), \; (1,2), \; (2,2) \}
$.
Note that indices $(4,3)$ and $(5,3)$ are neither in $I^+_0$ nor in $I^+_1$ because they belong to rows greater than or equal $i_3=4$. The associated variables cannot be fixed, even though variable $x(5,3)$ is set to 0 in $\underline{{M}}^1$ and $\overline{{M}}^3$ .

\paragraph{}

Matrices $\underline{{M}}^1$ and $\overline{{M}}^n$ can be computed in $O(mn)$ time, since at each iteration $j \in \{1, ..., n \}$ of Algorithm \ref{algo}, $i_f$ and $i_{ld}$ can be computed in $O(m)$ time. Once matrices $\underline{{M}}^1$ and $\overline{{M}}^n$ are known, sets $I^{\star}_0$ and $I^{\star}_1$ can be computed in $O(mn)$ time as well. It follows:

\begin{theorem}
\label{lineartime}
 $\mbox{Fix}_{{F}}(P_O)$ can be computed in linear time $O(mn)$.
\end{theorem}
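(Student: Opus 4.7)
The plan is to bound the runtime of each ingredient needed to produce the sets $I_0^{\star}, I_1^{\star}$ characterized in Theorem \ref{I0I1}, and sum the contributions.

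First I would analyze Algorithm \ref{algo} column by column. The initialization at $j=n$ reads $I_0, I_1$ and fills column $n$ of $\underline{{M}}^n$, which takes $O(m)$ time. At each subsequent iteration $j$, the only quantities needed are $i_f = i_f(\underline{{M}}^{j+1},j)$ and, when $i_f \leq m$, the last $j$-discriminating row $i_d \leq i_f$ in $\underline{{M}}^{j+1}$. Since testing whether a row is $j$-fixed or $j$-discriminating requires only looking at the entries in columns $j$ and $j+1$ of that row, both $i_f$ and $i_d$ can be found by a single top-down scan of $m$ rows, hence in $O(m)$ time. The subsequent update $(\underline{S}^{j+1}_0,\underline{S}^{j+1}_1) \to (\underline{S}^j_0, \underline{S}^j_1)$ is specified entry by entry in column $j$, using only the values already present in columns $j$ and $j+1$, so it is also $O(m)$. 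Summing over $j \in \{1, \dots, n\}$ gives $O(mn)$ for the construction of $\underline{{M}}^1$. The symmetric algorithm producing $\overline{{M}}^n$ is analyzed identically, yielding another $O(mn)$.

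Next I would handle the emptiness test and the computation of $I_0^{\star}, I_1^{\star}$. Testing whether $(\underline{S}^1_0, \underline{S}^1_1) = (S^{\varnothing}_0, S^{\varnothing}_1)$ or $(\overline{S}^n_0, \overline{S}^n_1) = (S^{\varnothing}_0, S^{\varnothing}_1)$ takes constant time per entry of the relevant column; if either case holds, Theorem \ref{thm:sequence} lets us return $\mbox{Fix}_F(P_O) = \varnothing$ and stop. Otherwise, Theorem \ref{I0I1} requires, for each column $j$, the index $i_j = \min\{i : \underline{{M}}^1(i,j) \neq \overline{{M}}^n(i,j)\}$. This is a single top-down comparison of two columns of length $m$, costing $O(m)$. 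With $i_j$ in hand, the sets $I_0^+$ and $I_1^+$ restricted to column $j$ are obtained by a second $O(m)$ scan over rows $i < i_j$, checking each free entry against $\overline{{M}}^n(i,j)$ and $\underline{{M}}^1(i,j)$. Iterating over $j \in \{1,\dots,n\}$ gives a further $O(mn)$ cost.

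Adding the three $O(mn)$ contributions (construction of $\underline{{M}}^1$, construction of $\overline{{M}}^n$, and extraction of $I_0^{\star}, I_1^{\star}$), and noting that reading the input pair $(I_0, I_1)$ is itself $\Omega(mn)$ in the worst case, gives the claimed $O(mn)$ bound. There is no real technical obstacle: the argument is a careful bookkeeping one, and the only point that deserves explicit mention is that scanning rows once per column suffices for both $i_f$ and $i_d$, since $i_d$ is sought among the rows above $i_f$ and can be tracked during the same pass.
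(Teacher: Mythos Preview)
Your proposal is correct and follows essentially the same approach as the paper, which argues in a brief paragraph that each iteration $j$ of Algorithm~\ref{algo} costs $O(m)$ (since $i_f$ and $i_d$ are found in $O(m)$), giving $O(mn)$ for $\underline{M}^1$ and $\overline{M}^n$, and that $I_0^\star, I_1^\star$ are then computed in $O(mn)$. Your write-up is simply a more detailed version of the paper's terse justification, with the same decomposition into per-column $O(m)$ scans.
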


\section{Sub-symmetries and sub-orbitopes} \label{sec:sub}

In the next sections, we generalize symmetries and full orbitopes to a given set of matrix subsets. 
A similar notion has been introduced in the context of Constraint Satisfaction Programming \cite{Gent05,Gent07}.
Symmetries corresponding to such subsets can be detected and tackled during the B\&B search. 

In this section, the symmetry group of an ILP is the set of all index permutations $\pi$ (and not only column permutations) such that for any solution matrix $X \in \mathcal{X} $, matrix $\pi(X)$ is also solution and has same cost, {\it i.e.}, $\pi(X) \in \mathcal{X} $ and  $c(X) = c(\pi(X))$.

\subsection{Sub-symmetries}
Consider a subset ${Q} \subset \mathcal{X}$ of solutions of ILP (\ref{ILP_orb}).
The sub-symmetry group $\mathcal{G}_{Q}$ relative to subset ${Q}$ is defined as the symmetry group of subproblem $\min \{ c x \; | \; x \in {Q} \}$.
For instance, such subset ${Q} \subset \mathcal{X}$ can correspond to a B\&B node, defined as solutions satisfying branching inequalities.

Permutations in sub-symmetry group  $\mathcal{G}_{Q}$  are referred to as \textit{sub-symmetries}. 
The main motivation to look at sub-symmetries in $\mathcal{G}_{Q}$ is that they remain undetected in the symmetry group $\mathcal{G}$ of the problem.
This is illustrated in the following example.

\begin{example}
\label{ex_sub}
Consider an ILP whose solution set is $\mathcal{X} = \{ X_1, \; X_2, \; Y \} \subset \{0, 1\}^{(1,3)}$, where

$$
X_1 = [1, \; 0, \; 1],\quad X_2 = [1, \; 1, \; 0],\quad Y = [0, \; 1, \; 0].
$$

Suppose also solutions $X_1$ and $X_2$ have same cost, $i.e.$, $c(X_1) = c(X_2)$.
Consider solution subset $Q \subset \mathcal{X}$ such that $ Q = \{ X \in \mathcal{X} \; | \; X(1,1)+X(1,2)+X(1,3)=2 \}$, then $Q = \{ X_1, X_2 \}$.
Now consider transposition $\pi_{132}$ such that $\pi_{132}(X) = [X(1,1), X(1,3), X(1,2)]$.
Obviously, $\pi_{132}$ is in sub-symmetry group $\mathcal{G}_Q$, but not in symmetry group $\mathcal{G}$, as $\pi_{132}(Y) = [0, \; 0, \; 1] \not\in \mathcal{X}$. 
\end{example}

\begin{property}
Two solutions in the same orbit with respect to a sub-symmetry group $\mathcal{G}_{Q}$ may not be in the same orbit with respect to the symmetry group $\mathcal{G}$.
\end{property}

Referring to Example \ref{ex_sub}, solutions $X_1$ and $X_2$ are in the same orbit with respect to $\mathcal{G}_{Q}$ since $\pi_{132} \in \mathcal{G}_{Q}$. To see that solutions $X_1$ and $X_2$ are not in the same orbit with respect to $\mathcal{G}$, it is sufficient to show that there is no permutation $\pi \in \mathcal{G}$ such that $\pi(X_1) = X_2$. 
Suppose there was such a permutation $\pi$. First note that $\pi_{132} \not\in \mathcal{G}$ thus $\pi \not= \pi_{132}$. Since both $X_1$ and $X_2$ have exactly one entry to 0, $\pi$ must be such that $\pi(e_2) = e_3$, where, for $i \in \{1, ..., 3\}$ $e_i \in \{0, 1\}^{(1,3)}$ is such that $e_i(1,i) = 1$ and $e_i(1,j) = 0$, $\forall j \not= i$. Since $Y = e_2$, $\pi(Y) = e_3 \not\in \mathcal{X}$, which is a contradiction. Thus, $X_1$ and $X_2$ are not in the same orbit with respect to the symmetry group $\mathcal{G}$, which shows the symmetry acting between these two solutions is not detected in $\mathcal{G}$.


\paragraph{}

We now generalize to sub-symmetries the concepts introduced for symmetries in Section \ref{Defs}.

Let $\{ Q_i \subset \mathcal{X}, \; i \in \{1, ..., s\}\}$ be a set of matrix subsets. To each $Q_i$, $i \in \{1, ..., s\}$, corresponds a sub-symmetry group $\mathcal{G}_{Q_i}$ containing sub-symmetries that may not be detected in the symmetry group $\mathcal{G}$.
Let  $O^i_k$, $k \in \{1, ..., o_i\}$, be the orbits defined by $\mathcal{G}_{Q_i}$ on subset ${Q_i}$, $i \in \{1, ..., s\}$. 

When considering only the symmetry group $\mathcal{G}$, the orbits of the solutions form a partition of the solution set $\mathcal{X}$.
However, the set $\mathcal{O} = \{ O^i_k, \;
k \in \{1, ..., o_i\}, \; i \in \{1, ..., s\} \}$ of orbits defined by sub-symmetry groups $\mathcal{G}_{Q_i}$, $i \in \{1, ..., s\}$, does not form a partition of $\mathcal{X}$  anymore.
Indeed, for given $i, \; j \in \{1, ..., s\}$, if $Q_i \cap Q_j  \not= \varnothing$, then any $x \in Q_i \cap Q_j$ will appear in both the orbits of $\mathcal{G}_{Q_i}$ and the orbits of $\mathcal{G}_{Q_j}$.
In order to break such sub-symmetries, removing all non-representatives of an orbit of $\mathcal{G}_{Q_i}$ may remove the representative of an orbit of $\mathcal{G}_{Q_j}$, thus leaving the latter unrepresented.

We thus generalize the concept of orbit in order to define a new partition of $\mathcal{X}$ taking sub-symmetries into account.
First, for given $X \in \mathcal{P}(m,n)$, let us define $\mathcal{G}(X)$, the set of all permutations $\pi$ in $\bigcup_{i=1}^s \mathcal{G}_{Q_i}$ such that $\pi$ can be applied to $X$:
$$
\mathcal{G}(X) = \bigcup_{Q_i \ni X} \mathcal{G}_{Q_i}
$$

We now define a relation $\mathcal{R}$ over the solution set $\mathcal{X}$.
Matrix $X'$ is said to be in relation with $X$, written $X' \; \mathcal{R}\; X$, if
$$
 \exists r \in \mathbb{N}, \; \exists \pi_1, ..., \pi_r \; | \; \pi_k \in \mathcal{G}(\pi_{k-1} ... \pi_1(X)), \forall  k \in \{1, ..., r\}, \mbox{ and } \; X' = \pi_1 \pi_2 ... \pi_{r} (X).
$$

The \textit{generalized orbit $\mathbb{O}$ of $X$ with respect to $\{ Q_i, i \in \{1, ..., s\} \}$}  is thus the set of all $X'$ such that $X' \; \mathcal{R}\; X$.
Roughly speaking, orbits that intersect one another are collected into generalized orbits.
Matrix $X'$ is in the generalized orbit of $X$  if $X'$ can be obtained from $X$ by composing permutations of groups $\mathcal{G}_{Q_i}$, ensuring that each permutation $\pi \in \mathcal{G}_{Q_i}$ is applied to an element of $Q_i$.
Note that $\mathcal{R}$ is an equivalence relation, thus the set of all generalized orbits with respect to $\{ Q_i, i \in \{1, ..., s\} \}$ is a partition of $\mathcal{X}$. Moreover, for a given $X \in \mathcal{X}$, each $X'$ in the generalized orbit of $X$ is such that $X' \in \mathcal{X}$ and $c(X') = c(X)$.
Note that the generalized orbit may not be an orbit of any of the symmetry groups $\mathcal{G}_{Q_i}$, $i \in \{1, ..., s\}$.

\begin{remark}
By definition, for any generalized orbit $\mathbb{O}$, there exist orbits $\sigma_1$, ..., $\sigma_{p} \in \mathcal{O}$ such that 
$\mathbb{O}= \cup_{i=1}^{p} \sigma_i$.
\end{remark}

Note that the union $\mathbb{O}= \cup_{i=1}^{p} \sigma_i$ may contain several orbits relative to the same subset $Q_i$.

\begin{example}
\label{ex_gen_orbit}
Consider an ILP having the following feasible solutions:
$$
X_1 = [1, \; 1, \; 0, \; 0],\quad X_2 = [1, \; 0, \; 0, \; 1],\quad X_3 = [0, \; 0, \; 1, \; 1], \quad X_4 = [0, \; 1, \; 1, \; 0], \quad X_5 = [0, \; 1, \; 0, \; 1]
$$
$$
\quad Y_1 = [1, \; 0, \; 0, \; 0],\quad Y_2 = [0, \; 0, \; 0, \; 1].
$$
with $c(X_1) = c(X_2) = c(X_3) = c(X_4) = c(X_5)$ and $c(Y_1) = c(Y_2)$.

Let $Q_1 = \{X_1, \; X_2, \; X_3, \; X_4 \}$, $Q_2 = \{X_1, \; X_5\}$, $Q_3 = \{X_4, \; X_5\}$ and $Q_4 = \{Y_1, \; Y_2\}$.
The permutation sending $X$ to $[X(1,j_1), X(1,j_2), X(1,j_3), X(1,j_4)]$ is denoted by $\pi_{j_1 j_2 j_3 j_4}$.
Note that $\pi_{2341} \in \mathcal{G}_{Q_1}$, $\pi_{4231} \in \mathcal{G}_{Q_2}$, $\pi_{1243} \in \mathcal{G}_{Q_3}$ and $\pi_{4231} \in \mathcal{G}_{Q_4}$.
Thus, the generalized orbit of $X_1$ with respect to $\{ Q_1, Q_2, Q_3, Q_4\}$ is $\{X_1, X_2, X_3, X_4, X_5 \}$, as $X_2 = \pi_{2341}(X_1)$, $X_3 = \pi_{2341} (X_2)$, $X_4 = \pi_{2341}  (X_3)$ and $X_5 = \pi_{1243} (X_4)$. Similarly, the generalized orbit of $Y_2$ with respect to $\{ Q_1, Q_2, Q_3, Q_4\}$ is $\{Y_1, Y_2 \}$.  All in all there are two generalized orbits  $O = \{X_1, X_2, X_3, X_4, X_5\}$ and $O' = \{Y_1, Y_2 \}$. Note that $O'$ corresponds to the single orbit $Q_4$.
\end{example}

While simple orbits $\sigma \in \mathcal{O}$ may sometimes be easily determined, the generalized orbits may anyway be difficult to compute. 
In this case, one may want to choose a representative $r(\sigma) \in \sigma$ for each orbit $\sigma \in \mathcal{O}$, and then use a \textit{sub-symmetry-breaking} technique to remove all elements $\sigma \backslash r(\sigma)$ from the search, for each orbit $\sigma \in \mathcal{O}$. As for given orbit $\sigma$, the set $\sigma \backslash r(\sigma)$ may contain the representative of another orbit $\sigma'$, 
we need to ensure that there remains at least one element per generalized orbit after the removal of all elements $\cup_{\sigma \in \mathcal{O}}( \sigma \backslash r(\sigma))$. To this end the choice of the representatives $r(\sigma)$ must satisfy the following compatibility property.

\begin{definition}
The set of representatives $\{ r(\sigma), \sigma \in \mathcal{O} \}$ is said to be \textit{orbit-compatible} if for any generalized orbit $\mathbb{O}= \cup_{i=1}^{p} \sigma_i$,  $\sigma_1$, ..., $\sigma_{p} \in \mathcal{O}$, there exists 
$ j \mbox{ such that } r(\sigma_j) = r(\sigma_i) \; \mbox{for all $i$ such that } r(\sigma_j) \in \sigma_i
$.
Such a solution $r(\sigma_j)$ is said to be a \textit{generalized representative} of $\mathbb{O}$.
\end{definition}

Note that there always exists a set of orbit-compatible representatives: start by choosing a representative $r(\sigma)$ for a given $\sigma \in \mathcal{O}$, and then choose $r(\sigma)$ as the representative of each orbit $\sigma'$ in which $r(\sigma)$ is contained.
Representatives of orbits not containing $r(\sigma)$ can be chosen arbitrarily.

There may exist several generalized representatives of a given generalized orbit.
If  $\{ r(\sigma), \sigma \in \mathcal{O} \}$ is orbit-compatible then for each generalized orbit $\mathbb{O}= \cup_{i=1}^{p} \sigma_i$ there exists $i \in \{1, ..., p\}$ such that either $r(\sigma_i)$ is not contained in any other orbit $\sigma_j \in \mathcal{O} $, $j \not= i$, or  $r(\sigma_i)$ is the representative of any orbit to which it belongs.
The next lemma states that when representatives are orbit-compatible, there remains at least one element per generalized orbit even if all elements $\cup_{\sigma \in \mathcal{O}}( \sigma \backslash r(\sigma))$ have been removed.

\begin{lemma}
For given orbit-compatible representatives $r(\sigma)$, $\sigma \in \mathcal{O}$, for any generalized orbit $\mathbb{O}= \cup_{i=1}^{p} \sigma_i$, $\sigma_1$, ..., $\sigma_{p} \in \mathcal{O}$,
$ \displaystyle{
\exists j \in \{1, ..., p\} \mbox{ such that } r(\sigma_j) \not\in \cup_{i=1}^p (\sigma_i \backslash r(\sigma_i))}
$.

\end{lemma}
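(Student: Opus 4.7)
The plan is to exploit the orbit-compatibility definition almost directly: it gives us an index $j$ with a specific ``absorbing'' property, and I will check that this property is exactly what is needed to keep $r(\sigma_j)$ out of the removed set $\bigcup_{i=1}^p (\sigma_i \setminus r(\sigma_i))$.

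First I would fix a generalized orbit $\mathbb{O} = \bigcup_{i=1}^{p} \sigma_i$ and apply the orbit-compatibility hypothesis to obtain an index $j \in \{1,\ldots,p\}$ such that $r(\sigma_j) = r(\sigma_i)$ for every $i$ with $r(\sigma_j) \in \sigma_i$. I would then show that this $j$ works, by verifying membership-by-membership: for each $i \in \{1,\ldots,p\}$, I argue that $r(\sigma_j) \notin \sigma_i \setminus r(\sigma_i)$.

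The argument splits into two cases according to whether $r(\sigma_j)$ lies in $\sigma_i$ or not. If $r(\sigma_j) \notin \sigma_i$, then trivially $r(\sigma_j) \notin \sigma_i \setminus r(\sigma_i)$. If $r(\sigma_j) \in \sigma_i$, then by the orbit-compatibility property of $j$, we have $r(\sigma_j) = r(\sigma_i)$, so $r(\sigma_j)$ is precisely the element that has been kept in $\sigma_i$, and therefore it does not belong to $\sigma_i \setminus r(\sigma_i)$. Taking the union over $i$ gives $r(\sigma_j) \notin \bigcup_{i=1}^{p}(\sigma_i \setminus r(\sigma_i))$, which is the required conclusion.

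I do not expect a substantial obstacle here: the statement is essentially an unpacking of the orbit-compatibility definition, and the only care needed is to keep the two cases distinct and to note that the chosen $j$ does not depend on $i$. In particular, no use of the structure of $\mathcal{X}$, of the cost function $c$, or of the specific sub-symmetry groups $\mathcal{G}_{Q_i}$ is required beyond what has already been encoded in the definition of orbit-compatible representatives.
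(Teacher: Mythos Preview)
Your argument is correct and is exactly the intended one: the lemma is an immediate consequence of the orbit-compatibility definition, and the paper in fact states it without proof. Your two-case check that the generalized representative $r(\sigma_j)$ avoids each $\sigma_i \setminus r(\sigma_i)$ is precisely the unpacking the reader is expected to perform.
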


Note that even if the set of representatives is orbit-compatible, it may happen that an entire orbit $\sigma \in \mathcal{O}$ is removed by a sub-symmetry breaking technique. However, if orbit-compatibility is satisfied, there will always remain at least one element in the corresponding generalized orbit, with same cost as any solution in orbit $\sigma$.

Referring to Example \ref{ex_gen_orbit}, we focus on generalized orbit $O$.
In Figure \ref{fig:fig1}, $X_1$ (resp. $X_4$, $X_5$) is chosen to be the representative of orbit $Q_1$ (resp. $Q_3$, $Q_2$).
The set of chosen representatives is not orbit-compatible.
Indeed, there is no generalized representative as each representative belongs also to another orbit, of which it is not representative.
Thus the set of removed elements $\cup_{i=1}^p (\sigma_i \backslash r(\sigma_i))$ contains all elements of the generalized orbit.
In Figure \ref{fig:fig2}, $X_3$ (resp. $X_5$) is chosen to be representative of $Q_1$  (resp. orbits $Q_3$ and $Q_2$). 
In this case, 
the set of chosen representatives is orbit-compatible, since solutions $X_3$ and $X_5$ are generalized representatives of $O$.
Indeed, $X_3$ is representative of $Q_1$ and does not belong to any other orbit, so it remains after removal removal of $\cup_{i=1}^p (\sigma_i \backslash r(\sigma_i))$. 
Solution $X_5$ is representative of $Q_2$ and $Q_3$, and belongs to these two orbits only, so it remains after removal as well.
In Figure \ref{fig:fig3},  $X_1$ (resp. $X_5$) is chosen to be representative of $Q_1$  (resp. orbits $Q_3$ and $Q_2$).
In this case, 
the set of chosen representatives is orbit-compatible, since solution $X_5$ is a generalized representative of $O$.
Indeed, $X_5$ is representative of $Q_2$ and $Q_3$ and does not belong to any other orbit. 
This choice of representatives is certainly the best as there is exactly one generalized representative of $O$.
Indeed, $X_1$ is representative of $Q_1$, but also belongs to orbit $Q_2$ which has another representative. Therefore, $X_1$ is in the set of removed elements $\cup_{i=1}^p (\sigma_i \backslash r(\sigma_i))$. 

\begin{figure}[ht]
\scriptsize
  \centering

  \begin{subfigure}[b]{0.4\linewidth}
      \begin{tikzpicture}[scale=0.8] 
  
\node (l1) at (0,0) [] {$X_1 = r(Q_1)$};
\node (l0) at (-0.2,0.2) [] {};

\node (c1) at (-0.5,0.35) [] {};
\node (c2) at (4.5,0.35) [] {};

\node (l2) at (1,0.7) [] {$X_2$};
\node (l3) at (3,0.7) [] {$X_3$};

\node (l4) at (4,0) [] {$X_4 = r(Q_3)$};
\node (l8) at (4,0.2) [] {};

\node (l5) at (2,-1.1) [] {$X_5 = r(Q_2) $};
\node (l6) at (3.1, -1.8) []  {};
\node (l7) at (1, -1.8) []  {};

\node (q1) at (-0.7, 1.3) []  {$Q_1$};
\node (q3) at (4.5, -1.2) []  {$Q_3$};
\node (q2) at (-0.5, -1.2) []  {$Q_2$};

\draw let \p1=(c1), \p2=(c2), \n1={atan2(\y2-\y1,\x2-\x1)}, \n2={veclen(\y2-\y1,\x2-\x1)}
  in ($ (c2)!0.5!(c1) $) ellipse [x radius=\n2/2+20pt, y radius=1cm,rotate=180-\n1];
  
  \draw[dashed] let \p1=(l0), \p2=(l6), \n1={atan2(\y2-\y1,\x2-\x1)}, \n2={veclen(\y2-\y1,\x2-\x1)}
  in ($ (l0)!0.42!(l6) $) ellipse [x radius=\n2/2+16pt, y radius=0.8cm,rotate=132-\n1];
  
    \draw[densely dotted] let \p1=(l8), \p2=(l7), \n1={atan2(\y2-\y1,\x2-\x1)}, \n2={veclen(\y2-\y1,\x2-\x1)}
  in ($ (l8)!0.42!(l7) $) ellipse [x radius=\n2/2+18pt, y radius=0.8cm,rotate=228-\n1];

\end{tikzpicture}
  \caption{\label{fig:fig1} No element remaining.}
\end{subfigure}%

  \begin{subfigure}[b]{0.4\linewidth}
        \begin{tikzpicture}[scale=0.8] 
\node (l1) at (0,0) [] {$X_1$};
\node (l0) at (-0.2,0.2) [] {};

\node (c1) at (-0.5,0.35) [] {};
\node (c2) at (4.5,0.35) [] {};

\node (l2) at (1,0.7) [] {$X_2$};
\node (l3) at (3,0.7) [] {$X_3 = r(Q_1)$};

\node (l4) at (4,0) [] {$X_4$};
\node (l8) at (4,0.2) [] {};

\node (l5) at (2,-1.1) [] {$X_5 = r(Q_2) = r(Q_3)$};
\node (l6) at (3.1, -1.8) []  {};
\node (l7) at (1, -1.8) []  {};

\node (q1) at (-0.7, 1.3) []  {$Q_1$};
\node (q3) at (4.5, -1.2) []  {$Q_3$};
\node (q2) at (-0.5, -1.2) []  {$Q_2$};

\draw let \p1=(c1), \p2=(c2), \n1={atan2(\y2-\y1,\x2-\x1)}, \n2={veclen(\y2-\y1,\x2-\x1)}
  in ($ (c2)!0.5!(c1) $) ellipse [x radius=\n2/2+20pt, y radius=1cm,rotate=180-\n1];
  
  \draw[dashed] let \p1=(l0), \p2=(l6), \n1={atan2(\y2-\y1,\x2-\x1)}, \n2={veclen(\y2-\y1,\x2-\x1)}
  in ($ (l0)!0.42!(l6) $) ellipse [x radius=\n2/2+16pt, y radius=0.8cm,rotate=132-\n1];
  
    \draw[densely dotted] let \p1=(l8), \p2=(l7), \n1={atan2(\y2-\y1,\x2-\x1)}, \n2={veclen(\y2-\y1,\x2-\x1)}
  in ($ (l8)!0.42!(l7) $) ellipse [x radius=\n2/2+18pt, y radius=0.8cm,rotate=228-\n1];
\end{tikzpicture}
\caption{\label{fig:fig2} $X_3$ and $X_5$ remain}
\end{subfigure}%
  \begin{subfigure}[b]{0.3\linewidth}
      \begin{tikzpicture}[scale=0.8] 
  
\node (l1) at (0,0) [] {$X_1 = r(Q_1)$};
\node (l0) at (-0.2,0.2) [] {};

\node (c1) at (-0.5,0.35) [] {};
\node (c2) at (4.5,0.35) [] {};

\node (l2) at (1,0.7) [] {$X_2$};
\node (l3) at (3,0.7) [] {$X_3$};

\node (l4) at (4,0) [] {$X_4$};
\node (l8) at (4,0.2) [] {};

\node (l5) at (2,-1.1) [] {$X_5 = r(Q_2) = r(Q_3)$};
\node (l6) at (3.1, -1.8) []  {};
\node (l7) at (1, -1.8) []  {};

\node (q1) at (-0.7, 1.3) []  {$Q_1$};
\node (q3) at (4.5, -1.2) []  {$Q_3$};
\node (q2) at (-0.5, -1.2) []  {$Q_2$};

\draw let \p1=(c1), \p2=(c2), \n1={atan2(\y2-\y1,\x2-\x1)}, \n2={veclen(\y2-\y1,\x2-\x1)}
  in ($ (c2)!0.5!(c1) $) ellipse [x radius=\n2/2+20pt, y radius=1cm,rotate=180-\n1];
  
  \draw[dashed] let \p1=(l0), \p2=(l6), \n1={atan2(\y2-\y1,\x2-\x1)}, \n2={veclen(\y2-\y1,\x2-\x1)}
  in ($ (l0)!0.42!(l6) $) ellipse [x radius=\n2/2+16pt, y radius=0.8cm,rotate=132-\n1];
  
    \draw[densely dotted] let \p1=(l8), \p2=(l7), \n1={atan2(\y2-\y1,\x2-\x1)}, \n2={veclen(\y2-\y1,\x2-\x1)}
  in ($ (l8)!0.42!(l7) $) ellipse [x radius=\n2/2+18pt, y radius=0.8cm,rotate=228-\n1];

\end{tikzpicture}
  \caption{\label{fig:fig3} $X_5$ remains}
\end{subfigure}%
  \caption{\label{fig:orbit}Orbits in the generalized orbit $\{X_1, X_2, X_3, X_4, X_5 \}$  with various choices of representatives}
  \label{diagram}
  
  \end{figure}
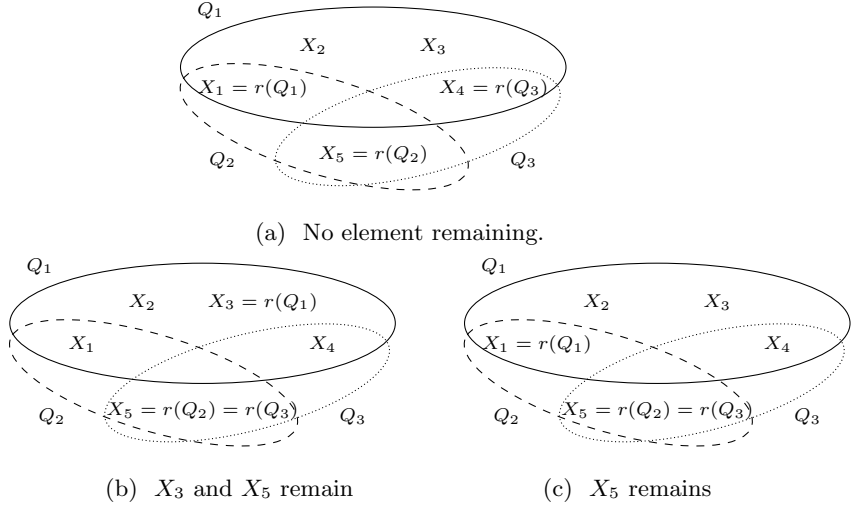

\subsection{Full sub-orbitopes}

Given $X \in \mathcal{X}$ and sets $R \subset \{1, ..., m \}$ and $C \subset \{1, ..., n\}$, we consider sub-matrix $(R,C)$ of $X$, denoted by $X(R,C)$, obtained by considering columns $C$ of $X$ on rows $R$ only.
Symmetry group $\mathcal{G}_{Q}$ is the \textit{sub-symmetric group} with respect to $(R, C)$ if it is the set of all permutations of 
the columns of $X(R, C)$.

In this section, we generalize the notion of full orbitope in order to account for sub-symmetries arising in subproblems of ILP (\ref{ILP_orb}) whose symmetry group is a sub-symmetric group.
We consider solutions subsets ${Q_i}$, $i \in \{1, ..., s\}$, such that for each $i$ the symmetry group $\mathcal{G}_{Q_i}$ is the sub-symmetric group with respect to $(R_i, C_i)$, $R_i \subset \{1, ..., m \}$ and $C_i \subset \{1, ..., n\}$.

For each orbit $O^i_k$, $k \in \{1, ..., o_i\}$, let its representative $X^i_k \in O^i_k$ be such that sub-matrix $X^i_k(R_i, C_i)$ is lexicographically maximal, $i.e.$, its columns are lexicographically non-increasing.

\begin{lemma}
\label{compatib}

The set of representatives $\{ X^i_k, \; k \in \{1, ..., o_i\}, \; i \in \{1, ..., s\} \}$ is orbit-compatible.
\end{lemma}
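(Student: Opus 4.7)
The plan is to exhibit a single total order on $\mathcal{P}(m,n)$ that is simultaneously compatible with the sub-matrix lex order of every $\mathcal{G}_{Q_i}$, and then select the maximum element of the generalized orbit as a common representative. Specifically, I would equip $\mathcal{P}(m,n)$ with the column-major lexicographic order $\prec$: for $X,Y \in \mathcal{P}(m,n)$, scan columns in the natural order $1,\dots,n$ and within each column scan rows $1,\dots,m$, declaring $X\prec Y$ when the first entry where they disagree has $X$-value $0$ and $Y$-value $1$. Since $\prec$ is a strict total order on a finite set, every non-empty subset of $\mathcal{P}(m,n)$ admits a unique $\prec$-maximum.

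The main technical step, and the one I expect to be the crux of the argument, is the following compatibility claim: for each $i$ and each $\pi\in\mathcal{G}_{Q_i}$, the matrix $\pi(X)$ satisfies $X\prec\pi(X)$ if and only if the sub-matrix $\pi(X)(R_i,C_i)$ is column-wise lexicographically greater than $X(R_i,C_i)$. The proof hinges on the fact that $\pi$ only rearranges entries inside $R_i\times C_i$: outside this block $X$ and $\pi(X)$ are identical. Hence the first full column where they differ must lie in $C_i$, say column $c_k$; and inside that column the rows outside $R_i$ are untouched, so the first differing row also lies in $R_i$. These two indices coincide with the first differing column and first differing row of $X(R_i,C_i)$ and $\pi(X)(R_i,C_i)$ in their own column-wise lex order, so the two comparisons agree.

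From this compatibility it follows immediately that within each orbit $O^i_k$, the representative $X^i_k$ chosen in the statement of the lemma (the one whose sub-matrix columns are lexicographically non-increasing) is precisely the $\prec$-maximum of $O^i_k$: every element of $O^i_k$ agrees with $X^i_k$ outside $R_i\times C_i$, and arranging the sub-matrix columns in lex-non-increasing order is the unique $\prec$-maximizing arrangement.

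To conclude, take an arbitrary generalized orbit $\mathbb{O}=\bigcup_{i=1}^p\sigma_i$ and let $X^*$ be its $\prec$-maximum. Pick any $\sigma_j$ containing $X^*$. Because $X^*$ is $\prec$-maximal in $\mathbb{O}$, it is a fortiori $\prec$-maximal in every $\sigma_i$ that contains it, and by the preceding step this forces $r(\sigma_i)=X^*$ for each such $i$. In particular $r(\sigma_j)=X^*$ and $r(\sigma_j)=r(\sigma_i)$ for every $i$ with $r(\sigma_j)\in\sigma_i$, which is exactly the orbit-compatibility condition.
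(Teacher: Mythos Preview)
Your proof is correct. It shares the paper's core idea---equip $\mathcal{P}(m,n)$ with a total order so that the lex-non-increasing sub-matrix arrangement coincides with the order-maximum in each orbit---but the execution differs in two ways. First, the paper uses the \emph{row-major} reading $(1,1),(1,2),\dots,(1,n),(2,1),\dots$ whereas you use the \emph{column-major} one; both work, but your compatibility step is cleaner because $\mathcal{G}_{Q_i}$ moves whole sub-columns, so the first differing full column lies in $C_i$ and the comparison reduces directly to the sub-matrix. Second, the paper is constructive: it starts from an arbitrary $X$, repeatedly swaps adjacent out-of-order sub-columns, and uses strict monotonicity in $\succ_M$ to force termination at a generalized representative. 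You instead take the global $\prec$-maximum of the generalized orbit outright and observe that it must be the chosen representative of every orbit containing it. Your route is shorter and more transparent; the paper's buys an explicit algorithm. One sentence you might tighten: the claim that ``arranging the sub-matrix columns in lex-non-increasing order is the unique $\prec$-maximizing arrangement'' is true but does not follow from your compatibility claim alone---it also uses the (easy) fact that the column-major maximum over all permutations of a multiset of columns is the non-increasing arrangement.
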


\begin{proof}
In order to prove that this set of representatives is orbit-compatible, we prove that there exists a generalized representative of each generalized orbit.

 First consider the following row-wise ordering of matrix entries:
$
(1,1), \; (1,2), \; ...,  \; (1,n)$, $(2,1),$ $ \;(2,2), $ $ \; ...$, $(2,n),  \;...,  \;(m,n)
$.
We define an ordering $\succ_M$ of the matrices such that for two matrices $A$ and $B$, $A \succ_M B$ if $A(i,j) > B(i,j)$, with $(i,j)$ the first position, with respect to the given ordering of matrix entries, where $A$ and $B$ differ.

For a given solution matrix $X \in \mathcal{X}$, 
we propose an algorithm computing a generalized representative of the generalized orbit of $X$. First set $X^0 = X$.
At iteration $k$ of the algorithm, there are two cases. In the first case, there exists $i \in \{1, ..., s\}$ such that $X^k \in Q_i$ and sub-matrix $X^k(R_i, C_i)$ is not lexicographically maximal, $i.e.$, there exists a column $j \in C_i$ such that $X^k(R_i, \{j\}) \prec X^k(R_i, \{j+1\})$. In this case, $X^{k+1}$ is set to $X^k$, except that columns $j$ and $j+1$ of sub-matrix $X^k(R_i, C_i)$ are transposed. Otherwise in the second case, the algorithm stops.
The claim is that this algorithm stops at some iteration $K$, and corresponding matrix $X^K$ is a generalized representative of the generalized orbit of $X$.
Note that at each iteration $k$,  $X^{k} \succ_M X^{k-1}$. 
As matrices $X^{k}$ take values in a finite set, there exists an iteration $K$ at which the algorithm stops.
By construction, 
matrix $X^K$ is in the generalized orbit of $X$, and for each $i \in \{1, ..., s\}$ such that $X \in Q_i$, sub-matrix $(R_i, C_i)$ of $X$ is lexicographically maximal. It is thus a generalized representative of the generalized orbit of $X$.
\qed
\end{proof}

The \textit{full sub-orbitope} $\mathcal{P}_{sub}$ with respect to subsets ${Q_i}$,  $i \in \{1, ..., s\}$, is the convex hull of binary matrices $X$ such that for each $i \in \{1, ..., s\}$, if $X \in Q_i$ then the columns of $X(R_i, C_i)$ are lexicographically non-increasing. In particular, $\mathcal{P}_{sub}$ contains the generalized representatives of each generalized orbit $\mathcal{O}$, but no other element of $\mathcal{O}$.
Note that the full sub-orbitope generalizes the full orbitope, as for $s=1$, $Q_1=\mathcal{X}$, $\mathcal{G}_{Q_1}= \mathfrak{S}_n$ and $(R_1, C_1) = (\{1, ..., m\}, \{1, ..., n\}) $, the associated full sub-orbitope is the full orbitope $\mathcal{P}_0(m,n)$.
Note that the restriction $\mathcal{P}_{sub}(R_i,C_i)$ of $\mathcal{P}_{sub}$ to sub-matrix $(R_i,C_i)$ is the full orbitope $\mathcal{P}_0(|R_i|, |C_i|)$ for any $i\in \{1, ..., s\}$.
The results presented in Section \ref{sec:intersection} can then directly be used to compute the smallest cube face containing the binary points in the intersection of $\mathcal{P}_{sub}(R_i,C_i)$ and a given cube face.

\section{Static and dynamic orbitopal fixing}
\label{sec:OF}

So far, the considered  lexicographical order on the columns was defined with respect to  order $1, ...., m$ on the rows.
In this section, we define a \textit{static orbitopal fixing} algorithm for the full orbitope, which relies on this lexicographical order.
We also define a \textit{dynamic orbitopal fixing} algorithm for the full orbitope, where the lexicographical order is defined with respect to an order on the rows determined by the branching decisions in the B\&B tree.
Interestingly these static and dynamic variants of the orbitopal fixing algorithm can be used also for the full sub-orbitope case.
It is worth noting that this orbitopal fixing algorithm based on our intersection result from Section \ref{sec:intersection} performs all possible variable fixings (with respect to the full (sub-)orbitope) as early as possible in the B\&B tree.

\subsection{Static orbitopal fixing}
\label{static_fix}

When solving MILP (\ref{ILP_orb}) with B\&B, static orbitopal fixing can be performed at the beginning of each node processing in the B\&B tree, in order to ensure that any enumerated solution $x$ in the B\&B tree is such that $x \in \mathcal{P}_0(m,n)$, assuming the lexicographical order is a priori given.

The \textit{static orbitopal fixing} algorithm at node $a$ is the following:
\vspace{-2.5mm}
\begin{itemize}

    \item[-] Set $I_0 = B^a_0$, $I_1 = B^a_1$,   where $B^a_0$ (resp. $B^a_1$) is the index set of variables previously fixed to 0 (resp. 1).
    \item[-] Compute matrices $\underline{{M}}^1$ and $\overline{{M}}^n$  using Algorithm 1. 
    \item[-] If $\underline{{M}}^1$ or $\overline{{M}}^n$ is defined by pair $(S^{\varnothing}_0,S^{\varnothing}_1)$, prune node $a$. Otherwise determine $I^{+}_0$ and $I^{+}_1$  using Th. \ref{I0I1}.    
    \item[-] Fix variable $x_{i,j}$ to 0, for each $(i,j) \in I_0^+$.
    \item[-] Fix variable $x_{i,j}$ to 1, for each $(i,j) \in I_1^+$.
\end{itemize}

From Theorem \ref{I0I1}, the pair $(I^\star_0, I^\star_1)$ = ($I_0 \cup I_0^+$, $I_1 \cup I_1^+$) defines $\mbox{Fix}_{F(a)}(\mathcal{P}_0(m,n))$, where $F(a)$ is the hypercube face given by $(B^a_0, B^a_1)$ at each node $a$ of a B\&B tree.
Thus the following result can be directly derived.

\begin{theorem}
\label{valid_statfix}
Let $\tau$ be a B\&B tree of ILP (\ref{ILP_orb}), in which static orbitopal fixing is performed, and the branching rule is arbitrary.
For each solution orbit $\mathcal{O}$  of ILP (\ref{ILP_orb}), there is exactly one solution of $\mathcal{O}$ enumerated in B\&B tree $\tau$.
\end{theorem}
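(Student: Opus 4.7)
\bigskip

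My plan is to decompose the statement into two parts: (i) at least one solution of each orbit $\mathcal{O}$ is enumerated, and (ii) at most one solution of each orbit is enumerated. The whole proof is essentially a direct consequence of Theorem~\ref{I0I1} (together with Lemma~\ref{fixing}) combined with the uniqueness, within any orbit under $\mathfrak{S}_n$, of a matrix with lexicographically non-increasing columns.

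I start by fixing an orbit $\mathcal{O}$ and its unique lex-non-increasing representative $r^\star$; by definition $r^\star \in \mathcal{P}_0(m,n)$, while every other $x \in \mathcal{O}\setminus\{r^\star\}$ satisfies $x \notin \mathcal{P}_0(m,n)$. For part (ii), I consider any such $x$ and any node $a$ of $\tau$ along the (unique) branching path that would lead to a leaf with $F(a)=\{x\}$. At such a leaf, $\mathcal{P}_0(m,n) \cap F(a) \cap \{0,1\}^{(m,n)} = \mathcal{P}_0(m,n) \cap \{x\} = \varnothing$, so by Lemma~\ref{fixing} (applied to $P=\mathcal{P}_0(m,n)$) we get $\mbox{Fix}_{F(a)}(\mathcal{P}_0(m,n))=\varnothing$. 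By Theorem~\ref{I0I1}, this is detected by the static orbitopal fixing algorithm (either $\underline{M}^1$ or $\overline{M}^n$ is the $(S^\varnothing_0,S^\varnothing_1)$ matrix), so node $a$ is pruned; the same reasoning applies at the first ancestor where $\mathcal{P}_0(m,n) \cap F(a) \cap \{0,1\}^{(m,n)}$ becomes empty, which necessarily happens before we reach $\{x\}$. Hence $x$ is never enumerated.

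For part (i), I take the path from the root of $\tau$ to the leaf $a^\star$ in which all variables are fixed to match $r^\star$. At every node $a$ on this path, $r^\star \in F(a)$ and $r^\star \in \mathcal{P}_0(m,n) \cap \{0,1\}^{(m,n)}$, so $r^\star \in \mathcal{P}_0(m,n) \cap F(a) \cap \{0,1\}^{(m,n)}$. Therefore $\mbox{Fix}_{F(a)}(\mathcal{P}_0(m,n)) \neq \varnothing$, so node $a$ is not pruned by orbitopal fixing. Moreover, by Lemma~\ref{fixing} every variable index in $I_0^\star$ (resp.\ $I_1^\star$) takes value $0$ (resp.\ $1$) in \emph{every} point of $\mathcal{P}_0(m,n) \cap F(a) \cap \{0,1\}^{(m,n)}$; in particular in $r^\star$. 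Hence $r^\star$ is consistent with all fixings $I_0^+(a), I_1^+(a)$ added at node $a$, and the path can continue down to $a^\star$ without contradicting any fixing. The representative $r^\star$ is therefore enumerated.

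Combining the two parts yields the claim. I do not expect a real obstacle here: the argument is a bookkeeping exercise once we observe that the only ingredient needed at every node is the characterization of $\mbox{Fix}_{F(a)}(\mathcal{P}_0(m,n))$ given by Theorem~\ref{I0I1}, which does not depend in any way on the branching rule used. The only subtle point to spell out clearly is that arbitrariness of the branching rule is harmless because the fixing algorithm and the pruning condition depend solely on the current face $F(a)$, and that the uniqueness of a lex-non-increasing matrix in each $\mathfrak{S}_n$-orbit is precisely what turns ``at most one non-pruned leaf per orbit'' into ``exactly one''.
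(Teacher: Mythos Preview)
Your argument is correct and is exactly the unpacking the paper leaves implicit: the paper gives no detailed proof, only stating that the theorem ``can be directly derived'' from the fact that the algorithm computes $\mbox{Fix}_{F(a)}(\mathcal{P}_0(m,n))$ at each node, and your two-part decomposition (the unique representative $r^\star$ survives all fixings because it lies in $\mathcal{P}_0(m,n)\cap F(a)$ at every node on its path, while every other orbit element is eventually pruned) is precisely that derivation. One cosmetic remark on part~(ii): the emptiness detection you invoke is really the content of Theorem~\ref{thm:sequence} rather than Theorem~\ref{I0I1}, and the path toward a non-representative $x$ can also be cut off by a fixing in $I_0^+\cup I_1^+$ that contradicts $x$ at a node where $\mathcal{P}_0(m,n)\cap F(a)\cap\{0,1\}^{(m,n)}$ is still non-empty---but in that case $x$ is excluded all the same, so your conclusion stands.
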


From Theorem \ref{lineartime},  the static orbitopal fixing algorithm is   in $O(mn)$ time at each node of the B\&B tree.

\subsection{Dynamic orbitopal fixing}
\label{dynamic_fix}


In the previous sections, the lexicographical order on the columns  of an $m \times n$ binary matrix was defined with respect to the order $1, ..., m$ on the rows.
Note that this order is arbitrary, and thus the definition of the lexicographical order can be extended for any ordering of the $m$ rows. Namely, considering a bijection $\phi : \{1, ...,m \} \rightarrow \{1, ..., m\}$, column $c$ is lexicographically greater than or equal to a column $c'$, with respect to ordering $\phi$, if there exists $i \in \{1, ...., m-1\}$ such that
 $\forall i' \leq i$, $y_{\phi(i')} = z_{\phi(i')}$
and $y_{\phi(i)+1} > z_{\phi(i)+1}$.

Dynamic fixing is to perform, at any node $a$ of the B\&B tree, orbitopal fixing with respect to reorderings $\phi_a$ of the row indices, defined by the branching decisions leading to node $a$.
The idea of pruning the B\&B tree with respect to an order defined by the branching process has been introduced by Margot \cite{Margot10}.

As a first step,  suppose at each node $a$ of the B\&B tree, the branching disjunction has the form
\begin{equation}
\label{disj}
x_{i_a,j_a} = 0 \quad \vee \quad x_{i_a,j_a} = 1.
\end{equation}

\textit{Dynamic orbitopal fixing} is to perform orbitopal fixing  on row set $I_a = \{$ $\phi_a(1)$, $\phi_a(2)$, ..., $\phi_a(|I_a|)$ $\}$  at each node $a$, where lexicographical ordering $\phi_a$ and $I_a$ are defined   recursively   as follows.
$$\mbox{If $a$ is the root, then } 
\left\{
\begin{array}{ll}
   I_a = \{ i_a \}\\
   \phi_a(1) = i_a \\
\end{array}
\right.
\mbox{, otherwise  }
\left\{
\begin{array}{ll}
    I_a = I_b \cup \{ i_a \}\\
   \phi_a(i) = \phi_b(i) &  \forall i \in \{ 1, ..., |I_b| \}.\\
    \phi_a(|I_b|+1) = i_a & \mbox{if } i_a \not\in I_b, \\
    \mbox{where $b$ is the father of $a$.} \\
\end{array}
\right.
$$


Note that an arbitrary branching rule used alongside with an arbitrary ordering may lead to the removal of every optimal solution from the B\&B tree. The following theorem shows that the use of branching rule (\ref{disj}) and ordering $\phi_a$ preserves an optimal solution in the B\&B tree.

\begin{theorem}
\label{valid_dynfix}
Let $\tau$ be a B\&B tree of ILP (\ref{ILP_orb}), in which dynamic orbitopal fixing is performed and branching disjunctions have the form (\ref{disj}).
For each solution orbit $\mathcal{O}$ of ILP (\ref{ILP_orb}), there is exactly one solution of $\mathcal{O}$ enumerated in B\&B tree $\tau$.
\end{theorem}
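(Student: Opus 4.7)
The plan is to show two things: at most one solution per orbit is enumerated (uniqueness) and at least one is (existence). At any node $a$ the dynamic orbitopal fixing procedure---the algorithm of Section \ref{static_fix} applied to the sub-matrix indexed by the rows $I_a$ in the ordering $\phi_a$---keeps exactly those $x \in F(a) \cap \{0,1\}^{(m,n)}$ whose sub-matrix $x(I_a,\{1,\ldots,n\})$ has lexicographically non-increasing columns in the order $\phi_a$, where $F(a)$ is the cube face given by $(B^a_0, B^a_1)$. A structural observation that I will repeatedly use is that every fixing generated along the path from the root to $a$---whether from a branching disjunction (\ref{disj}) or from the fixing algorithm itself---concerns a variable whose row index already lies in the current row set; consequently $B^a_0 \cup B^a_1 \subseteq I_a \times \{1,\ldots,n\}$.

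For uniqueness, I would suppose that two distinct solutions $x, x' \in \mathcal{O}$ are enumerated, at leaves $\ell$ and $\ell'$, and let $b$ be their last common ancestor. The branching at $b$ separates them: after relabelling, $x(i_b, j_b) = 0$ and $x'(i_b, j_b) = 1$. Both $x$ and $x'$ survived the dynamic fixing at $b$, so the sub-matrices $x(I_b, \{1,\ldots,n\})$ and $x'(I_b, \{1,\ldots,n\})$ both have lexicographically non-increasing columns in the order $\phi_b$. Since the symmetry group is $\mathfrak{S}_n$, there is a column permutation $\pi$ with $x' = \pi(x)$; hence $x'(I_b, \{1,\ldots,n\})$ is obtained from $x(I_b, \{1,\ldots,n\})$ by the same column permutation. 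Any multiset of columns admits a unique lexicographically non-increasing arrangement as a matrix, so $x(I_b, \{1,\ldots,n\}) = x'(I_b, \{1,\ldots,n\})$. Because $i_b \in I_b$ by the recursive definition of $I_a$, this forces $x(i_b, j_b) = x'(i_b, j_b)$, a contradiction.

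For existence, I would prove by induction on the depth of $a$ the invariant that every orbit $\mathcal{O}$ contains some $x$ feasible at $a$. At the root, any $y \in \mathcal{O}$ can be column-permuted to sort row $i_{\mathrm{root}}$ non-increasingly, producing a representative of $\mathcal{O}$ that satisfies both the orbitope condition and the (trivial) initial fixings. For the step at a node $b$ with feasible $x^\star \in \mathcal{O}$, set $\epsilon = x^\star(i_b, j_b)$ and consider the $\epsilon$-child $a$ of $b$. If $i_a \in I_b$ then $(I_a, \phi_a) = (I_b, \phi_b)$ and $x^\star$ itself survives at $a$. Otherwise, I construct $x'$ by permuting the columns of $x^\star$, within each maximal block of columns that are identical on rows $I_b$, so as to sort the values of row $i_a$ non-increasingly. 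Then $x' \in \mathcal{O}$, $x'$ agrees with $x^\star$ on every row of $I_b$ (so $x'$ satisfies every prior fixing and the branching fixing $x_{i_b, j_b} = \epsilon$, since $i_b \in I_b$), and $x'(I_a, \{1,\ldots,n\})$ is lexicographically non-increasing in the order $\phi_a$. By Lemma \ref{fixing}, $x'$ survives the dynamic fixing at $a$, closing the induction.

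The main obstacle is precisely this within-block re-sorting: it has to restore the lex order when the ordering $\phi$ is extended by a fresh branching row $i_a$, while leaving every previously enforced fixing untouched. The argument works because $\phi_a$ extends $\phi_b$ by appending $i_a$ at the end, so only the relative order inside blocks of columns identical on $I_b$ needs adjustment, and because the structural observation guarantees that every prior fixing lies on rows of $I_b$ and is thus unaffected by such within-block permutations. With the invariant established, following for any orbit $\mathcal{O}$ a path from the root along children that preserve some $x \in \mathcal{O}$ feasible leads to a leaf whose feasible region contains an element of $\mathcal{O}$; that element is enumerated, and uniqueness implies it is the only one from $\mathcal{O}$ in the tree.
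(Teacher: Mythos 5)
Your proof is correct and rests on the same central fact as the paper's --- once the branched rows are ordered by $\phi_a$, the lexicographically non-increasing arrangement of the multiset of columns restricted to those rows is uniquely determined by the orbit --- but you package it differently. The paper performs a single top-down walk: at each node it counts the column patterns on the rows branched so far (the quantities $n_{i_0}$, $n^1_{i_1}$, $n^0_{i_1}$, \dots, which are orbit invariants), deduces that every enumerated member of $\mathcal{O}$ takes the same value on the current branching variable, and concludes that exactly one branch per node, hence one leaf, can carry $\mathcal{O}$. You instead split the statement into uniqueness and existence. Your uniqueness half (last common ancestor plus the unique sorted arrangement of a column multiset) is a compact restatement of the paper's invariant-counting argument. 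Your existence half is genuinely more explicit than the paper's: the paper constructs the candidate $X$ along the unique surviving path but never verifies that it actually escapes the fixing at every node, whereas your within-block re-sorting --- justified by the observations that all accumulated fixings live on rows of $I_b$ and that $\phi_a$ extends $\phi_b$ by appending $i_a$ last, so only the order inside blocks of columns identical on $I_b$ needs repair --- produces a witness in $P \cap F(a) \cap \{0,1\}^{(m,n)}$ at each node and invokes Lemma \ref{fixing} to keep it alive. Both proofs share the same implicit conventions (leaves are fully-fixed, non-bound-pruned nodes, which is what makes ``survived the fixing down to a leaf'' force the sub-matrix on $I_b$ to be lexicographically sorted); granting that, your argument is sound and, on the existence side, somewhat more complete than the paper's.
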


\begin{proof}
  The sketch of the proof is to produce a solution $X \in \mathcal{O}$ and prove that $X$ is the only solution of $\mathcal{O}$ which is enumerated in $\tau$.

First consider the branching disjunction at the root node $a_r$:
$
(x_{i_0,j_0} =  0 \quad \vee \quad x_{i_0,j_0} = 1).
$
Then $\phi_{a_r}(1) = i_0$.
Let $n_{i_0}$ be the number of 1-entries on row $i_0$ of any matrix $X \in \mathcal{O}$.
 Since  dynamic orbitopal fixing is enforced in $\tau$, any solution enumerated by $\tau$ must be lexicographically non-increasing with respect to $\phi_{a_r}$.
Then, as row $i_0$ is the first row with respect to the lexicographical order $\phi_{a_r}$,  any $X \in  \mathcal{O}$ enumerated by the B\&B tree will be such that:
$$
X(i_0, j) = 1, \; \forall j \in \{1, ..., n_{i_0}\}
\qquad \mbox{and} \qquad 
X(i_0, j) = 0, \; \forall j \in \{n_{i_0}+1, ..., n\}
$$


Note that if $j_0 \leq n_{i_0}$ (resp. $j_0 > n_{i_0}$) then any $X \in  \mathcal{O}$ enumerated by $\tau$ is such that $X(i_0, j_0) = 1$ (resp. $X(i_0, j_0) = 0$). Thus there is no solution of $\mathcal{O}$ in the branch $x_{i_0,j_0} =  0$ (resp. $x_{i_0,j_0} = 1 $).

 Suppose  w.l.o.g. that $j_0 \leq n_{i_0}$, so the node considered is $b_1$, the son of $a_r$ such that $x_{i_0,j_0} =  1$.
Consider the branching disjunction at node $b_1$:
$(x_{i_1,j_1} =  0 \quad \vee \quad x_{i_1,j_1} = 1)$.
If $i_1 = i_0$ then, by the same arguments as at the root node, there is exactly one branch in which all elements of $ \mathcal{O}$ are enumerated, and this branch can be easily determined.
Otherwise, since $i_1 \not= i_0$, then by construction, $\phi_{b_1}(1)=i_0$ and $\phi_{b_1}(2)=i_1$.
Let $n^1_{i_1}$ (resp. $n^0_{i_1}$) be the number of columns $j$ such that $X(i_0,j)=1$ (resp. $X(i_0,j)=0$) and $X(i_1,j)=1$.
Since row $i_1$ is second with respect to lexicographical order $\phi_{b_1}$, any $X\in  \mathcal{O}$ enumerated by the B\&B tree will be such that:
$$
\left\{
\begin{array}{ll}
   X(i_1, j) = 1 \quad  & \forall j \in \{1, ..., n^1_{i_1}\} \cup \{n_{i_0} + 1, ..., n_{i_0}+ n^0_{i_1}\}\\
   X(i_1, j) = 0 &  \forall j \in \{n^1_{i_1} + 1, ..., n_{i_0} \} \cup  \{n_{i_0} + n^0_{i_1} + 1, ..., n \} \\
\end{array}
\right.
$$

Thus, all $X \in  \mathcal{O}$ enumerated by $\tau$ have the same value $v$ in entry $(i_1,j_1)$, and this value can be determined, as previously, by finding to which of the sets $\{1, ..., n^1_{i_1}\}$, $\{n^1_{i_1} + 1, ..., n_{i_0} \}$, $\{n_{i_0} + 1, ..., n_{i_0}+ n^0_{i_1}\}$, $\{n_{i_0}+ n^0_{i_1} + 1, ..., n \}$ does index $j_1$ belong.
Therefore, since for all $X \in  \mathcal{O}$ enumerated by $\tau$, $X(i_1,j_1) = v$, there is exactly one branch ($x_{i_1,j_1} =  v$) in which any $X \in  \mathcal{O}$ is enumerated. This process can be repeated until a leaf node $a_l$ is reached. At that point, all entries of $X$ are determined. By construction, $X$ is the only element of $ \mathcal{O}$ enumerated by $\tau$, since at each node we considered, there was always a unique branch leading to all elements of $ \mathcal{O}$.
 
\qed \end{proof}

Now suppose the branching disjunction at each node $a$ is arbritrary.
The latter result can be extended to show that dynamic orbitopal fixing can also be used in this case.
For each node $a$, consider a branching disjunction of the form:
\begin{equation}
\label{disj_arb}
\sum_{i \in \mathcal{R}_a} \sum_{j = 1}^p  \lambda^i_a x_{i,j} \leq k \quad \vee \quad \sum_{i \in \mathcal{R}_a} \sum_{j = 1}^p \lambda^i_a x_{i,j} > k.
\end{equation}
where $\mathcal{R}_a = \{ r_{a,1}, ..., r_{a,p} \} \subset \{1, ..., m\}$.

A new lexicographical ordering $\widetilde{\phi}_a$ taking into account every row involved in disjunction (\ref{disj_arb}) must be defined at each node $a$. 
Namely, row subset $\widetilde{I}_a \subset \{ 1, ..., m\}$ and bijection $\widetilde{\phi}_a : \{ 1, ..., |\widetilde{I}_a|  \} \rightarrow \widetilde{I}_a$ are as follows.

$$ \mbox{If $a$ is the root, then }
\left\{
\begin{array}{ll}
   \widetilde{I}_a = \mathcal{R}_a &\\
   \widetilde{\phi}_a(k) = r_{a,k}, \; & k \in \{ 1, ..., p \} \\
\end{array}
\right.
\mbox{, otherwise }
\left\{
\begin{array}{ll}
   \widetilde{I}_a = \widetilde{I}_b \cup \mathcal{R}_a &\\
   \widetilde{\phi}_a(i) = \widetilde{\phi}_b(i) &  \forall i \in \{ 1, ..., |\widetilde{I}_b| \}\\
   \widetilde{\phi}_a(|\widetilde{I}_b|+k) = r'_{a,k}, & \forall k \in \{ 1, ..., p' \}  \\
   \mbox{where $b$ is the father of $a$} \\
   \mbox{and $ \{ r'_{a,1}, ..., r'_{a,p'} \}  = \mathcal{R}_a \backslash \widetilde{I}_b $}.
\end{array}
\right.
$$

\subsection{Orbitopal fixing in the full sub-orbitope}

Recall that given $X \in \mathcal{X}$ and sets $R \subset \{1, ..., m \}$ and $C \subset \{1, ..., n\}$, sub-matrix $X(R,C)$ of $X$ is obtained by considering columns $C$ of $X$ on rows $R$ only.
Consider solutions subsets ${Q_i} \subset \mathcal{X}$, $i \in \{1, ..., s\}$ such that for each $i \in \{1, ..., s\}$ the symmetry group $\mathcal{G}_{Q_i}$ is the sub-symmetric group with respect to $(R_i, C_i)$. Let $\mathcal{P}_{sub}$ be the associated full sub-orbitope.
Static (resp. dynamic) orbitopal fixing can be performed for $\mathcal{P}_{sub}$ at each node $a$ of the B\&B tree as follows.
Consider $\mathfrak{I}_a \subset \{1, ..., s\}$ such that
for each $i \in \mathfrak{I}_a$, each solution $x$ to the sub-problem at node $a$ is in $Q_i$.
The idea is to apply static (resp. dynamic) orbitopal fixing to the sub-matrix $X(R_i, C_i)$, for each $i \in \mathfrak{I}_a$.

By Lemma \ref{compatib} the representatives associated with the natural lexicographical order are orbit-compatible.
Consequently, static orbitopal fixing for $\mathcal{P}_{sub}$ does not change the optimal value returned by the B\&B process.
Lemma \ref{compatib} can directly be adapted to the case when the representatives are associated to a lexicographical order defined by arbitrary row-order $\phi$, and the proof of Theorem \ref{valid_dynfix} can be slightly modified to show that dynamic orbitopal fixing for $\mathcal{P}_{sub}$ is also valid.

\section{Application to the Unit Commitment Problem}
\label{sec:MUCP}

Given a discrete time horizon $\mathcal{T} = \{1, ...,  T \}$, a demand for electric power $D_t$ is to be met at each time period $t \in \mathcal{T}$. Power is provided by a set $\mathcal{N}$ of $n$ production units. 
At each time period, unit $j \in \mathcal{N}$ is either down or up, and in the latter case, its production is within [$P^j_{min}$, $P^j_{max}$].
Each unit must satisfy minimum up-time (resp. down-time) constraints, \textit{i.e.}, it must remain up (resp. down) during at least $L^j$ (resp. $\ell^j$) periods after start up (resp. shut down). 
Each unit $j$ also features three different costs: a fixed cost $c^j_f$, incurred each time period the unit is up; a start-up cost $c_0^j$, incurred each time the unit starts up; and a cost $c^j_p$ proportional to its production.
The Min-up/min-down Unit Commitment Problem (MUCP) is to find a production plan minimizing the total cost while satisfying the demand and the minimum up and down time constraints. 
The MUCP is strongly NP-hard \cite{ComplexityMUCP}.

For each unit $j \in \mathcal{N}$ and time period $t \in \mathcal{T}$, let us consider three variables: $x_{t,j} \in \{ 0,1 \}$ indicates if unit $j$ is up at time $t$; $u_{t,j} \in \{ 0,1 \}$ whether unit $j$ starts up at time $t$; and $p_{t,j} \in \mathbb{R}$ is the quantity of power produced by unit $j$ at time $t$.
Without loss of generality we consider that $L^j, \ell^j \leq T$.
A formulation for the MUCP is as follows \cite{Takriti05,Ostrowski15,MUCP16}.

\begin{alignat}{3}
& \underset{x,u,p}{\text{min}}
& & \sum_{j=1}^{n} \sum_{t=1}^{T} c^j_f x_{t,j} + c^j_p p_{t,j} + c^j_0 u_{t,j} \nonumber \\
& \text{s. t.} \quad
& & \sum_{t' = t - L^j + 1}^{t} u_{t',j} \leq x_{t,j} & \forall j \in \mathcal{N},  \; \forall t \in \{ L^j, ..., T\} \label{min-up} \\
&&& \sum_{t' = t - \ell^j + 1}^{t} u_{t',j} \leq 1 - x_{t-\ell^j, j} & \forall j \in \mathcal{N},  \; \forall t \in \{ \ell^j, ..., T\}  \label{min-down}\\
&&& u_{t,j} \geq x_{t,j} - x_{t-1,j} & \forall j \in \mathcal{N},  \; \forall t \in \{2, ..., T \} \label{logical} \\
&&& \sum_{j=1}^{n} p_{t,j} \geq D_t & \forall t \in \mathcal{T} \label{demande} \\
&&& P_{min}0^j x_{t,j} \leq p_{t,j} \leq P_{max}^j x_{t,j} & \forall j \in \mathcal{N},  \; \forall t \in \mathcal{T} \label{pow}\\
&&& x_{t,j}, u_{t,j} \in \{ 0,1 \} & \forall j \in \mathcal{N}, \; \forall t \in \mathcal{T} \label{integrity}
\end{alignat}

\subsection{Symmetries in the MUCP}

Symmetries in the MUCP (and in the UCP) arise from the existence of groups of identical units, $i.e.$, units with identical characteristics ($P_{min}$, $P_{max}$, $L$, $\ell$, $c_f$, $c_0$, $c_p$). 
The instance is partitioned into \textit{types} $h \in \{1, ..., H\}$ of $n_h$ identical units.
The unit set of type $h$ is denoted by $\mathcal{N}_h$.

The solutions of the MUCP can be expressed as a series of binary matrices.
For a given type $h$, we introduce matrix $X^h \in \mathcal{P}(T, n_h)$ such that entry $X^h(t,j)$ corresponds to variable $x_{t,j'}$, where $j'$ is the index of the $j^{th}$ unit of type $h$. Column of matrix $X^h(j)$ corresponds to the up/down plan relative to the $j^{th}$ unit of type $h$. Similarly, we introduce matrices $U^h$ and $P^h$.

The set of all feasible $X = (x_{t,j})_{t \in \mathcal{T}, j \in \mathcal{N}}$ is denoted by $\mathcal{X}_{MUCP}$.
Note that any solution matrix $X$ (resp. $U$, $P$) can be partitioned in $H$ matrices $X^h$ (resp. $U^h$, $P^h$).
Since all units of type $h$ are identical, their production plans can be permuted, provided that the same permutation is applied to matrices $X^h$, $U^h$ and $P^h$.
Thus, the symmetry group $\mathcal{G}$ contains all column permutations applied to $X^h$, $U^h$ and $P^h$ for each unit type $h$.
Consequently, for each type $h$, feasible solutions $X^h$ can be restricted to be in the full orbitope $\mathcal{P}_0(T, n_h)$.
As binary variables $U$ are uniquely determined by variables $X$, breaking the symmetry on the $X$ variables will break the symmetry on $U$ variables.

\subsection{Sub-symmetries in the MUCP}
\label{sub-sym-MUCP}

There are other sources of symmetry, arising from the possibility, in some cases, of permuting some sub-columns of matrices $X^h$.
For example, consider two identical units at a given node $a$. Suppose the fixings at the previous nodes are such that these two units are down and ready to start up at given time $t_0$. Then their plans after $t_0$ can be permuted, even if they do not have the same up/down plan before $t_0$.
This kind of sub-symmetry is not detected by the symmetry group $\mathcal{G}$. Indeed, as soon as their up/down plans before $t_0$ are different, the two units would no longer be considered symmetrical with respect to $\mathcal{G}$.

More precisely, a unit $j \in \mathcal{N}$ is \textit{ready to start up} at time $t$ if and only if $\forall t' \in \{ t - \ell^j, ..., t - 1\}$, $x_{t', j} = 0$.
Similarly, a unit $j \in \mathcal{N}^k$ is \textit{ready to shut down} at time $t$ if and only if $\forall t' \in \{ t - L^j, ..., t - 1\}$, $x_{t', j} = 1$.

For each time period $ t \in \{1, ...,\mathcal{T}\}$ and subset $ \mathfrak{N} \subset \mathcal{N}_h, \; h \in \{1, ..., H\} $ of identical units, consider the following subsets of $\mathcal{X}_{MUCP}$:

$$
\overline{Q}^t_\mathfrak{N} = \big\{ X \in \mathcal{X}_{MUCP} \; | \; X(t',j) = 0, \; \forall t' \in \{ t - \ell^j, ..., t - 1\}, \; \forall j \in \mathfrak{N} \big\}
$$
$$
\underline{Q}^t_\mathfrak{N} = \big\{  X \in \mathcal{X}_{MUCP} \; | \; X(t',j) = 1, \; \forall t' \in \{ t - L^j, ..., t - 1\}, \; \forall i \in \mathfrak{N} \big\}
$$

Note that at each node $a$ of the tree, it is easy to find the sets $\overline{Q}^t_\mathfrak{N}$ and $\underline{Q}^t_\mathfrak{N}$, $ t \in \{1, ...,\mathcal{T}\}$, $\mathfrak{N} \subset \mathcal{N}_h, \; h \in \{1, ..., H\}$, to which any solution of the subproblem associated to node $a$ belongs.
Indeed, for each time period $t$ and for each unit $i$ down (resp. up) at time $t$, it is possible to know how long unit $i$ has been down (resp. up), and thus whether unit $i$ is ready to start up (resp. shut down) or not. 
If we denote by $\mathfrak{N}^u_{t,h}$ (resp. $\mathfrak{N}^d_{t,h}$) the set of type $h$ units which are ready to start up (resp. shut down) at time $t$, then all solutions at node $a$ are in sets $\overline{Q}^t_{\mathfrak{N}^u_{t,h}}$ and $\underline{Q}^t_{\mathfrak{N}^d_{t,h}}$.

Let $\mathcal{G}_{\overline{Q}^t_\mathfrak{N}}$ and $\mathcal{G}_{\underline{Q}^t_\mathfrak{N}}$ be the sub-symmetry groups associated to  $\overline{Q}^t_\mathfrak{N}$ and $\underline{Q}^t_\mathfrak{N}$, $ t \in \{1, ...,\mathcal{T}\}$, $\mathfrak{N} \subset \mathcal{N}_h, \; h \in \{1, ..., H\}$.
Note that groups $\mathcal{G}_{\overline{Q}^t_\mathfrak{N}}$ and $\mathcal{G}_{\underline{Q}^t_\mathfrak{N}}$ contain the sub-symmetric group associated to the sub-matrix defined by rows and columns $(\{t, ..., T\}, \mathfrak{N})$.
The corresponding full sub-orbitope is denoted by $\mathcal{P}_{sub}(MUCP)$.

\subsection{Orbitopal fixing for the MUCP}

As the production plans of identical units can be permuted, each variable matrix $X^h$ can be restricted to be in the full orbitope $\mathcal{P}_0(T, n_h)$.
More generally we have seen in Section \ref{sub-sym-MUCP} that variable matrix $X$ can be restricted to be in the full sub-orbitope $\mathcal{P}_{sub}(MUCP)$
 
The fixing strategies developed in Sections \ref{static_fix} and \ref{dynamic_fix} can thus be applied to fix variables in each matrix $X^h$, in order to enumerate only solutions with lexicographically maximal $X^h$.  These strategies can also be applied to restrict the feasible set to the full sub-orbitope $\mathcal{P}_{sub}(MUCP)$.

The possible approaches are the following:
\begin{itemize}
\item[-] Static orbitopal fixing (SOF) for the full orbitopes $\mathcal{P}_0(T, n_h)$, $h \in \{1, ..., H \}$, where the order on the rows is decided before the branching process. 
\item[-] Dynamic orbitopal fixing (DOF) for the full orbitopes $\mathcal{P}_0(T, n_h)$, $h \in \{1, ..., H \}$, where the order on the rows $\widetilde{\phi}$ is decided during the branching process, as described in Section \ref{dynamic_fix}.
\item[-] Static orbitopal fixing for the full orbitopes $\mathcal{P}_0(T, n_h)$, $h \in \{1, ..., H \}$ and for the full sub-orbitope $\mathcal{P}_{sub}(MUCP)$.
\item[-] Dynamic orbitopal fixing for the full orbitopes $\mathcal{P}_0(T, n_h)$, $h \in \{1, ..., H \}$ and for the full sub-orbitope $\mathcal{P}_{sub}(MUCP)$.
\end{itemize}

In the static case, the branching decisions are completely free. As stated in Section \ref{dynamic_fix}, the branching decisions remain free in the dynamic case, provided that the corresponding rows are ordered accordingly.
In our experiments, we only consider the branching disjunctions of the form $(x_{t,j} =  0 \; \vee \; x_{t,j} = 1)$, or $
(x_{t,j} - x_{t-1,j} \leq 0 \quad \vee \quad x_{t,j} - x_{t-1,j } = 1)
$, $i.e.$, $(u_{t,j} =  0 \; \vee \; u_{t,j} = 1)$.

\section{Experimental results for the MUCP}
\label{exp_orbitope}

All experiments were performed using one thread of a PC with a 64 bit Intel Core i7-6700 processor running at 3.4GHz, and 32 GB of RAM memory. The MUCP instances are solved until optimality (defined within $10^{-7}$ of relative optimality tolerance) or until the time limit of 3600 seconds is reached.

In the following experiments, we compare resolution methods pairwise using a speed-up indicator. For given approaches $m_1$ and $m_2$, the \textit{speed-up} achieved by $m_1$ with respect to $m_2$ on a given instance is the ratio $\frac{CPU(m_2)}{CPU(m_1)}$.
The \textit{average speed-up}, computed on a set $I$ of $p$ instances, is the geometric mean $(\Pi_{i=1}^{p} s_i)^{\frac{1}{p}}$ of the speed-ups $s_1$,..., $s_{p}$.

The following methods are considered:

\begin{tabular}{p{2.5cm}p{10cm}} 
- \textit{Default} Cplex: & Default implementation of Cplex used by its C++ API, \\  
- \textit{Callback} Cplex: & Cplex with empty Branch and LazyConstraint Callbacks,\\
- MOB: & modified orbital branching with no branching rules enforced (Cplex is free to choose the next branching variable),\\
- SOF:  & Static orbitopal fixing for the full-orbitope, \\
- DOF & Dynamic orbitopal fixing for the full orbitope, \\
- SOF-S: & Static orbitopal fixing for the full orbitope and sub-orbitope, \\
- DOF-S: & Dynamic orbitopal fixing for the full orbitope and sub-orbitope. \\
\end{tabular}

 For methods MOB, SOF, DOF, SOF-S and DOF-S, we also use Cplex C++ API. 
 The fixing (or branching) algorithms are included in Cplex using the
so-called Branch Callback, alongside with an empty LazyConstraint Callback to warn Cplex that our methods will remove solutions from the feasible set.
Note that such callbacks deactivate some Cplex features 
designed to improve the efficiency of the overall algorithm. This may
induce a bias when comparing results obtained with and without the use of a callback.
This is why we also compare our methods to Callback Cplex.


\subsection{Instances}
\label{sec:inst}

In order to determine which symmetry-breaking technique performs best with respect to the number of rows and columns of matrix $X$, we consider various instance sizes $(n,T)$. Namely, we generate instances with $T=96$ and smaller $n$ :  (30, 96),  (60,96) and instances with $T=48$ and larger $n$: (60, 48), (80,48). 

For each pair $(n,T)$, we generate a set of MUCP instances as follows.

For each instance, we generate a ``2-peak per day" type demand with a large variation between peak and off-peak values: during one day, the typical demand in energy has two peak periods, 
one in the morning and one in the evening.
The amplitudes between peak and off-peak periods have similar characteristics to those in the dataset from \cite{Carrion06}.

We consider the parameters ($P_{min}$, $P_{max}$, $L$, $\ell$, $c_f$, $c_0$, $c_p$) of each unit from the dataset presented in \cite{Carrion06}. We draw a correlation matrix between these characteristics and define a possible range for each characteristic.
In order to introduce symmetries in our instances, some units are randomly generated based on the parameters correlations and ranges. Each unit generated is duplicated $d$ times, where $d$ is randomly selected in $[1, \frac{n}{F}]$ in order to obtain a total of $n$ units.
The parameter $F$ is called symmetry factor, and can vary from 2 to 4 depending on the value of $n$. Note that these instances are generated along the same lines as literature instances considered in \cite{MUCP16}, but with different $F$ factors.

Table \ref{data} provides some statistics on the instances characteristics. For each instance, a group is a set of two or more units with same characteristics. Each unit which has not been duplicated is a singleton. 
The first and second entries column-wise are the number of singletons and groups. The third entry is the mean group size and the fourth entry is the maximum group size.
Each entry row-wise corresponds to the average value obtained over 20 instances with same size $(n,T)$ and same symmetry factor $F$.

\begin{table}[ht]
\centering
\begin{tabular}{|c|c||c|S[table-format=1.1]|S[table-format=2.1]|S[table-format=2.1]|}

\hline
{Size $(n,T)$} & {Sym. factor} & {Nb of singletons} & {Nb of groups} & {Mean size of groups} & {Group max size} \\

\hline
(30,96)  & F = 4& 1.3 & 6.5 & 4.5 & 6.7 \\ 
\cline{2-6}
 & F = 3& 0.4 & 5.3 & 6.0 & 8.7 \\ 
\cline{2-6}
 & F = 2& 0.6 & 4.1 & 7.6 & 11.4 \\ 
\hline
\hline
(60,96)  & $F$ = 4& 0.6 & 7.9 & 7.8 & 13.3 \\ 

\cline{2-6}
 & $F$ = 3& 0.3 & 6.0 & 10.5 & 16.7 \\ 

\cline{2-6}
 & $F$ = 2& 0.2 & 4.4 & 14.8 & 24.9 \\ 

\hline
\hline
(60,48)  & $F$ = 4& 0.8 & 7.7 & 7.9 & 13.1 \\ 
\cline{2-6}
 & $F$ = 3& 0.6 & 5.8 & 10.9 & 17.8 \\ 
\cline{2-6}
 & $F$ = 2& 0.2 & 4.8 & 13.9 & 23.8 \\ 
\hline
\hline
(80,48)  & $F$ = 4& 0.4 & 8.0 & 10.6 & 18.5 \\ 

\cline{2-6}
 & $F$ = 3& 0.5 & 6.7 & 12.5 & 22.2 \\ 

\cline{2-6}
 & $F$ = 2& 0.1 & 4.5 & 18.9 & 31.4 \\ 

\hline

\end{tabular}
\caption{Instance characteristics}
\label{data}
\end{table}

Note that the most symmetrical instances are the ones with the highest $\frac{n}{F}$ ratio. Indeed, these instances feature large groups of identical units, and the size of solution orbits grows exponentially with the size of these groups.
It is well-known that symmetries dramatically impair the B\&B solution process. The highly symmetrical instances are thus expected to be the hardest ones. 
We also expect that symmetry-breaking techniques will prove useful specifically on these instances.

\subsection{Static and dynamic orbitopal fixing}

The average speed-up achieved by DOF over SOF is given in Table \ref{SOF_DOF}. The average is computed for each group of 20 instances with same size and symmetry factor.

 \begin{table}[ht]
\centering
\begin{tabular}{|c|c|c||c|c|c|}
\hline
 \multicolumn{3}{|c||}{(30, 96)} & \multicolumn{3}{c|}{(60, 48)}\\
\hline
  $F=4$ & $F=3$ & $F=2$  & $F=4$ & $F=3$ & $F=2$ \\
\hline
14.5  & 3.6  & 2.6  & 4.6  & 11.7  & 6.7 \\
\hline
\end{tabular}
\caption{Speed-up of DOF with respect to SOF}
\label{SOF_DOF}
\end{table}

It is clear that DOF outperforms SOF on each group, by a factor ranging from 2.6 to 14. Thus, we do not consider SOF nor SOF-S in the following experiments.
This behavior can be explained as DOF allows for more variable fixings earlier in the B\&B tree.
Indeed, the orbitopal fixing algorithm propagates a branching decision occurring at $r^{th}$ row (with respect to the lexicographical order) only if there are enough variables already fixed in $1^{st}$ to $r-1^{th}$ rows.
As DOF defines the lexicographical order with respect to the branching decisions, chances are that many variables are already fixed in each row with rank less than $r$. Thus, DOF often propagates branching decisions in the B\&B tree earlier than SOF does.


Note that MOB also follows the branching decisions, as it branches on a whole variable orbit, $i.e.$, a set of symmetrical variables on a given row.
Contrary to DOF, MOB does not account for variables outside the orbit, whereas these variables could be fixed as well.

\subsection{Modified orbital branching for the MUCP}

The authors in \cite{Ostrowski15} apply MOB alongside with several complementary branching rules to break symmetries of the MUCP with additional technical constraints. Note that sub-symmetries, defined in Section \ref{sec:sub}, appear in the symmetry groups of the subproblems associated to the B\&B nodes. In practice, this is not exploited in \cite{Ostrowski15}, where the symmetries considered at each node are all contained in the symmetry group of the global problem.
Different approaches are compared experimentally: Default Cplex,
 Callback Cplex, OB (orbital branching), MOB with no branching rules enforced (Cplex is free to choose the next branching variable), and MOB with RMRI (the most flexible branching rule ensuring full-symmetry breaking).
 
Because advanced Cplex features are turned off when callbacks are used, there is still a huge performance gap between Callback Cplex and default Cplex.
It is shown in \cite{Ostrowski15} that MOB with RMRI is more efficient than MOB, OB and Callback Cplex in terms of CPU time. The difference between using MOB with RMRI and MOB alone is however not as significant as the difference between MOB and simple orbital branching. 
In particular, referring to the experimental results obtained in \cite{Ostrowski15}, the (geometric) average CPU time speed-up between MOB and MOB+RMRI is $1.098$.
Even though MOB+RMRI is slightly better than MOB with no branching rules, we will choose in Section \ref{sec:comp} to compare our methods to MOB. The rationale behind is that its implementation is straightforward, thus leaving no room to interpretation.

\subsection{Comparison of Cplex, MOB, DOF and DOF-S}
\label{sec:comp}

\begin{table}[!htbp]
\centering
\begin{tabular}{|c|c||c|c|S[table-format=8.2]|S[table-format=7.1, table-align-text-post = false]|S[table-format=5.1]|}
\hline
\multicolumn{2}{|c||}{Instances} & Method & \#opt & {\#nodes} & {\#fixings} & {CPU time} \\
\hline
(30,96)  & $F = $ 4 & DC & 20 & 34742 & {-}  & 34\\
\hhline{~~-----}
 & & CC & 12 & 1669334 & {-}  & 1506\\
\hhline{~~-----}
 & & MOB & 14 & 794522 & 49529 & 1212\\
\hhline{~~-----}
 & & DOF & 19 & 325977 & 135984 & 339\\
\hhline{~~-----}
 & & DOF-S & 20 & 96416 & 74281 & 129\\
\hhline{~======}
 & $F = $ 3 & DC & 16 & 823455 & {-}  & 877\\
\hhline{~~-----}
 & & CC & 8 & 1977613 & {-}  & 2296\\
\hhline{~~-----}
 & & MOB & 12 & 733875 & 197964 & 1578\\
\hhline{~~-----}
 & & DOF & 13 & 831504 & 667733 & 1338\\
\hhline{~~-----}
 & & DOF-S & 16 & 484930 & 564660 & 899\\
\hhline{~======}
 & $F = $ 2 & DC & 17 & 367672 & {-}  & 606\\
\hhline{~~-----}
 & & CC & 11 & 1244729 & {-}  & 1727\\
\hhline{~~-----}
 & & MOB & 12 & 960300 & 660193 & 1525\\
\hhline{~~-----}
 & & DOF & 14 & 575483 & 698740 & 1089\\
\hhline{~~-----}
 & & DOF-S & 17 & 496889 & 736485 & 1026\\
\hline
\hline
(60,96)  & $F = $ 4 & DC & 9 & 1971737 & {-}  & 1994\\
\hhline{~~-----}
 & & CC & 3 & 1899968 & {-}  & 3072\\
\hhline{~~-----}
 & & MOB & 9 & 730306 & 1037813 & 2082\\
\hhline{~~-----}
 & & DOF & 8 & 932314 & 3992329 & 2224\\
\hhline{~~-----}
 & & DOF-S & 10 & 678260 & 3410927 & 1828\\
\hhline{~======}
 & $F = $ 3 & DC & 10 & 1679013 & {-}  & 2134\\
\hhline{~~-----}
 & & CC & 0 & 1890180 & {-}  & 3600\\
\hhline{~~-----}
 & & MOB & 3 & 649769 & 381602 & 3064\\
\hhline{~~-----}
 & & DOF & 5 & 952878 & 1813052 & 2957\\
\hhline{~~-----}
 & & DOF-S & 7 & 633231 & 2193599 & 2465\\
\hhline{~======}
 & $F = $ 2 & DC & 9 & 1669806 & {-}  & 2128\\
\hhline{~~-----}
 & & CC & 0 & 1295402 & {-}  & 3600\\
\hhline{~~-----}
 & & MOB & 7 & 562942 & 281326 & 2490\\
\hhline{~~-----}
 & & DOF & 8 & 496424 & 967275 & 2379\\
\hhline{~~-----}
 & & DOF-S & 8 & 525966 & 1322964 & 2199\\
\hline
\hline
(60,48)  & $F = $ 4 & DC & 17 & 1059290 & {-}  & 830\\
\hhline{~~-----}
 & & CC & 8 & 2664489 & {-}  & 2252\\
\hhline{~~-----}
 & & MOB & 17 & 348881 & 205477 & 639\\
\hhline{~~-----}
 & & DOF & 16 & 665100 & 702066 & 764\\
\hhline{~~-----}
 & & DOF-S & 17 & 431652 & 694538 & 558\\
\hhline{~======}
 & $F = $ 3 & DC & 13 & 1322111 & {-}  & 1283\\
\hhline{~~-----}
 & & CC & 7 & 2224234 & {-}  & 2374\\
\hhline{~~-----}
 & & MOB & 13 & 932987 & 778563 & 1317\\
\hhline{~~-----}
 & & DOF & 15 & 486352 & 972444 & 922\\
\hhline{~~-----}
 & & DOF-S & 15 & 443246 & 1083904 & 935\\
\hhline{~======}
 & $F = $ 2 & DC & 17 & 701617 & {-}  & 645\\
\hhline{~~-----}
 & & CC & 10 & 1448065 & {-}  & 1804\\
\hhline{~~-----}
 & & MOB & 18 & 190009 & 54377 & 417\\
\hhline{~~-----}
 & & DOF & 18 & 150486 & 407031 & 382\\
\hhline{~~-----}
 & & DOF-S & 19 & 135906 & 449141 & 325\\
\hline
\hline
(80,48)  & $F = $ 4 & DC & 8 & 2423226 & {-}  & 2168\\
\hhline{~~-----}
 & & CC & 1 & 2653960 & {-}  & 3420\\
\hhline{~~-----}
 & & MOB & 5 & 1134716 & 1047231 & 2798\\
\hhline{~~-----}
 & & DOF & 6 & 1185164 & 2246156 & 2607\\
\hhline{~~-----}
 & & DOF-S & 9 & 861262 & 2476840 & 2160\\
\hhline{~======}
 & $F = $ 3 & DC & 10 & 1404892 & {-}  & 2015\\
\hhline{~~-----}
 & & CC & 1 & 1553426 & {-}  & 3447\\
\hhline{~~-----}
 & & MOB & 2 & 744775 & 262750 & 3247\\
\hhline{~~-----}
 & & DOF & 2 & 936007 & 1062502 & 3248\\
\hhline{~~-----}
 & & DOF-S & 3 & 865991 & 1285128 & 3169\\
\hhline{~======}
 & $F = $ 2 & DC & 8 & 2715484 & {-}  & 2217\\
\hhline{~~-----}
 & & CC & 0 & 3628624 & {-}  & 3600\\
\hhline{~~-----}
 & & MOB & 6 & 1145092 & 1150613 & 2552\\
\hhline{~~-----}
 & & DOF & 6 & 1594025 & 3597266 & 2591\\
\hhline{~~-----}
 & & DOF-S & 8 & 1328985 & 2662087 & 2269\\
\hline
\end{tabular}
\captionsetup{justification=centering}
\caption{Performance indicators relative to the comparison of five methods \\ to solve MUCP instances wih symmetries }
\label{average}
\end{table}

We compare five different resolution methods for the MUCP: Default Cplex, Callback Cplex, MOB, DOF and DOF-S. 
As shown in Table \ref{SOF_DOF}, dynamic orbitopal fixing outperforms the static variant, thus SOF and SOF-S are not considered.

Table \ref{average} provides, for each method and each group of 20 instances:

\begin{tabular}{p{1.5cm}p{12cm}} 
\#opt: & Number of instances solved to optimality,   \\  
\#nodes: & Average number of nodes, \\
\#fixings: & Average number of fixings  (for MOB, it is the total number of variables fixed during the branching process) \\
CPU time: & Average CPU time in seconds. \\
\end{tabular}

Note that the best feasible solution value is not reported, as all methods are able to find the same best feasible solution value within the time limit.

First note that instances of size (80,48) and, to a lesser extent, of size (60, 96), are the hardest ones: Default Cplex only solves to optimality half of them, and Callback Cplex solves nearly none of them.  Further increases in the number $n$ of units or in the number $T$ of time steps would then not be of particular interest, if the corresponding instances are intractable.

Interestingly, increasing the number $n$ of units seems to have more impact on the CPU time than increasing the number $T$ of time steps.
Indeed, from instances of size (60,48) to instances of size (80,48), $n$ is only multiplied by a factor 1.3, but the computation time increases by a factor 2. A similar increase in computation time is obtained from instances of size (60,48) to instances of size (60,96), but in this case the number $T$ of time periods has increased by a factor 2.
Similarly, from instances of size (30,96) to instances of size (60,48), $n$ increases but $T$ decreases, and both the CPU time and the number of nodes increase.
This strong computational impact of parameter $n$ illustrates the polynomiality of the MUCP when $n$ is fixed and $T$ is arbitrary \cite{ComplexityMUCP}.

Note that in average, MOB explores more nodes in comparison with DOF and DOF-S. Even though MOB has more opportunities to fix variables due to the large number of nodes visited, the number of fixings performed by DOF or DOF-S is always much larger (often by at least one order of magnitude). 
Thus, DOF and DOF-S solve MUCP instances faster, since they branch less thanks to the fixing procedure.

\paragraph{}
Table \ref{global_table}  compares each method $m_1$, among MOB, DOF and DOF-S,  with respect to method $m_2$, among Default Cplex and Callback Cplex,  in terms of average speed-up. The average speed-up is computed on groups of 20 instances of same size $(n,T)$ and same symmetry factor $F$, as described in Section \ref{sec:inst}.

Table  \ref{global_table} shows:

\begin{tabular}{p{1.5cm}p{12cm}} 
$(n,T)$: & Instance size, \\
$F$: & Symmetry factor, \\
$m_1$: & Method $m_1$, namely MOB, DOF or DOF-S, \\
$m_2$: & Method $m_2$, namely Default Cplex or Callback Cplex, \\
\#opt: & Number of instances solved to optimality by $m_1$, \\
opt$_\Delta$: & Difference in terms of the number of instances solved to optimality by $m_1$ and by $m_2$, \\
$S_{CPU}$: & Average speed-up by method $m_1$ with respect to $m_2$, computed on a group of 20 instances. \\
\end{tabular}

\begin{table}[ht]
\centering
\begin{tabular}{|c|c||c|c||cH|S[table-format=3.3] ||cH|S[table-format=3.2]|}
\hline
\multicolumn{2}{|c||}{Instance} & \multicolumn{2}{c||}{} & \multicolumn{3}{c||}{$m2 = $ Default Cplex} & \multicolumn{3}{c|}{$m_2 = $ Callback Cplex} \\
\hline
$(n,T)$ & Sym & $m_1$ & \#opt & opt$_{\Delta}$ &  $S_{nodes}$ & $S_{CPU}$ & opt$_{\Delta}$  & $S_{n}$ & $S_{cpu}$ \\
\hline 
(30,96)  & $F = $ 4 & MOB & 14 & -6 & 0.192  & 0.0902  & 2 & 3.22  & 1.57  \\
 & & DOF & 19 & -1 & 0.646 & 0.659  & 7 & 11.8 & 11.4  \\
 & & DOF-S & 20 & 0 & 0.55& 0.725  & 8 & 14.2& 12.6  \\
\hhline{~---------}
 & $F = $ 3 & MOB & 12 & -4 & 0.504  & 0.371  & 4 & 22.2  & 3.78  \\
 & & DOF & 13 & -3 & 0.359 & 0.507  & 5 & 8.5 & 5.17  \\
 & & DOF-S & 16 & 0 & 1.03& 1.05  & 8 & 14.5& 10.7  \\
\hhline{~---------}
 & $F = $ 2 & MOB & 12 & -5 & 0.926  & 0.197  & 1 & 12.2  & 2.1  \\
 & & DOF & 14 & -3 & 2.77 & 0.564  & 3 & 38.7 & 6  \\
 & & DOF-S & 17 & 0 & 1.57& 0.716  & 6 & 50.5& 7.62  \\
\hline
 \hline
(60,96)  & $F = $ 4 & MOB & 2 & -8 & 0.121  & 0.218  & 1 & 30.3  & 1.36  \\
 & & DOF & 2 & -8 & 0.0459 & 0.214  & 1 & 15.9 & 1.33  \\
 & & DOF-S & 3 & -7 & 0.0434& 0.218  & 2 & 51.1& 1.36  \\
\hhline{~---------}
 & $F = $ 3 & MOB & 3 & -7 & 0.504  & 0.314  & 3 & 1  & 2.33  \\
 & & DOF & 5 & -5 & 0.0753 & 0.244  & 5 & 1 & 1.81  \\
 & & DOF-S & 7 & -3 & 1.59& 0.555  & 7 & 1& 4.11  \\
\hhline{~---------}
 & $F = $ 2 & MOB & 7 & -2 & 1.82  & 0.358  & 7 & 1  & 3.92  \\
 & & DOF & 8 & -1 & 1.28 & 0.493  & 8 & 1 & 5.39  \\
 & & DOF-S & 8 & -1 & 2.41& 0.669  & 8 & 1& 7.32  \\
\hline
\hline
(60,48)  & $F = $ 4 & MOB & 17 & 0 & 2.99  & 0.978  & 9 & 13.5  & 13.5  \\
 & & DOF & 16 & -1 & 2.9 & 1.45  & 8 & 15.8 & 20.1  \\
 & & DOF-S & 17 & 0 & 3.53& 1.92  & 9 & 18.2& 26.5  \\
\hhline{~---------}
 & $F = $ 3 & MOB & 13 & 0 & 3.86  & 0.94  & 6 & 16.4  & 8.6  \\
 & & DOF & 15 & 2 & 4.72 & 2.07  & 8 & 21.4 & 18.9  \\
 & & DOF-S & 15 & 2 & 4.82& 2.25  & 8 & 21.9& 20.6  \\
\hhline{~---------}
 & $F = $ 2 & MOB & 18 & 1 & 3  & 1.84  & 8 & 6.73  & 11.7  \\
 & & DOF & 18 & 1 & 1.88 & 2.58  & 8 & 4.1 & 16.4  \\
 & & DOF-S & 19 & 2 & 1.47& 2.6  & 9 & 4.1& 16.5  \\
\hline 
\hline 
(80,48) & $F = $ 4 & MOB & 5 & -3 & 1.09  & 0.316  & 4 & 257  & 2.88  \\
 & & DOF & 6 & -2 & 0.311 & 0.462  & 5 & 392 & 4.21  \\
 & & DOF-S & 9 & 1 & 0.667& 0.75  & 8 & 392& 6.83  \\
 \hhline{~---------}
 & $F = $ 3 & MOB & 6 & -2 & 25.2  & 0.637  & 6 & 1  & 4.97  \\
 & & DOF & 6 & -2 & 0.919 & 0.422  & 6 & 1 & 3.29  \\
 & & DOF-S & 8 & 0 & 4.61& 0.792  & 8 & 1& 6.18  \\
\hhline{~---------}
 & $F = $ 2 & MOB & 9 & 0 & 1.19  & 0.701  & 6 & 6.19  & 5.22  \\
 & & DOF & 8 & -1 & 1.03 & 0.632  & 5 & 6.24 & 4.7  \\
 & & DOF-S & 10 & 1 & 2.49& 1.1  & 7 & 7.73& 8.14  \\
 \hline 

\end{tabular}
\captionsetup{justification=centering}
\caption{MOB and dynamic orbitopal fixing (DOF and DOF-S) - average speed-up for various instances compared to Default Cplex and Callback Cplex}
\label{global_table}
\end{table}

In terms of CPU time, MOB, DOF and DOF-S greatly outperform Callback Cplex, but the improvement is larger with DOF and even more significant with DOF-S. 
Indeed, even on the less symmetrical instances ($(n,T)=(30,96)$ and $F=4$), MOB outruns Callback Cplex by a factor 1.57 and DOF increases this factor to 11.4.
Similarly, on more symmetrical instances $(n,T)=(60,48)$, $F=4$ (resp. $F=3$, $F=2$), MOB outperforms Callback Cplex by a factor 13.5 (resp. 8.6, 11.7) while DOF increases this factor to 20.1 (resp. 18.9, 16.4).

When both symmetries and sub-symmetries are accounted for, the performance is significantly improved.
For example, on some of the less symmetrical instances ($(n,T)=(30,96)$ and $F=3$), DOF outruns Callback Cplex by a factor 5.17 and this factor increases to 10.7 with DOF-S.
Similarly, on more symmetrical instances $(n,T)=(60,96)$, $F=3$ (resp. $F=2$), DOF outperforms Callback Cplex by a factor 1.81 (resp. 5.39) while DOF-S increases this factor to 4.11 (resp. 7.32). On instances $(n,T)=(60,48)$, $F=4$, DOF-S is even faster than Callback Cplex by a factor 26.5.

\paragraph{}

As observed in \cite{Ostrowski15}, there is a huge performance gap between Callback Cplex and Default Cplex.
Thus, even if MOB, DOF and DOF-S substancially outperforms Callback Cplex in each instance group, it is sometimes not enough to close the performance gap between Default and Callback Cplex, especially for instances with small $n$. On the opposite, for large $n$ instances where symmetries are a major source of difficulty, DOF and DOF-S clearly outperforms Default Cplex.

Typically, when $T$ is large compared to $n$ ($i.e.$, on instances of size (60,96) and (30,96)) it seems that non symmetry-related difficulties arise, and none of the compared methods catch up with Default Cplex. In this context, the cost of applying symmetry-breaking techniques (including the performance loss induced by the use of a Callback) seems too important compared to the impact of symmetries.
The performance loss is less important with DOF and DOF-S than it is with MOB.
DOF-S is the method that is the closest to catch up with Default Cplex. Indeed, for $(n,T)=(30,96)$ instances, it solves to optimality as many instances as Default Cplex,
and on $F=3$ instances of size (30,96) DOF-S even slightly improves Default Cplex, while MOB is slower than Default Cplex by a factor 3.

On the opposite, when $n$ is large compared to $T$ ($i.e.$, on instances of size (80,48) and (60,48)), symmetry seems to be a major factor of computational difficulty. Indeed, DOF-S performs quite well in this context and solves to optimality some instances Default Cplex cannot. For example, on instances $(n,T)=(60,48)$, $F=2$ (resp. $F=3$), DOF-S solves two more instances to optimality than Default Cplex.
DOF and MOB do not perform as well as DOF-S in this respect.
On instances of size (60,48), DOF and DOF-S outrun Default Cplex by a factor 2, while MOB is closer to a factor 1. When $n$ increases to 80, DOF-S achieves a speed-up of 1.1 compared to Default Cplex on the most symmetrical instances ($F=2$), while MOB and DOF stay behind with a speed-up around 0.7 relatively to Default Cplex. Moreover, DOF-S solves more instances to optimality than Default Cplex. For less symmetrical instances with $n=80$, $i.e.$  $F=3$ and $F=4$ groups, none of the compared methods are able to outrun Default Cplex in terms of CPU time. It seems that non-symmetry related difficulties inherent to the MUCP arise in these instances featuring a large number of distinct units.
In this context, DOF-S is the method closest to catch up with Default Cplex. Indeeed, on both groups of instances, the speed-up provided by DOF is around 0.8, whereas this factor ranges from 0.3 to 0.6 for MOB and DOF.
While Callback Cplex solves to optimality only one instance out of forty, DOF-S proves its efficiency by solving even more instances to optimality than Default Cplex.

\section*{Conclusion}
 
In this paper, we define a linear time orbitopal fixing algorithm for 
the full orbitope. 
We propose to also account for symmetries arising in solutions subsets, which we refer to as sub-symmetries. Sub-symmetries related to sub-symmetric groups are considered, 
leading us to define the full sub-orbitope. 
We extend our orbitopal fixing algorithm in order to apply it to both orbitope and sub-orbitope structures.
This algorithm is proven to be optimal, in the sense that at any node $a$ in the search tree, any variable that 
can be fixed, with respect to the lexicographical order, is fixed by the algorithm.
 We propose a dynamic version of the 
orbitopal fixing algorithm,
where the lexicographical order at node $a$ is defined with respect to the 
branching decisions leading
to $a$.

For MUCP instances, experimental results show that the dynamic variant of our algorithm performs much better than the static variant. 
  Moreover, it is clear that sub-symmetries greatly impair the solution process for MUCP instances, since dynamic orbitopal fixing for both full orbitope and full sub-orbitope (DOF-S) performs even better than dynamic orbitopal fixing for the full orbitope (DOF).
  Finally, our experiments show that our approach is 
 competitive with commercial solvers like Cplex and state-of-the-art
 techniques like modified orbital branching (MOB).
 Even if MOB already improves Callback Cplex, the improvement is even more significant with our methods DOF and DOF-S. Furthermore, even if there is a huge performance gap between Callback Cplex and Default Cplex,  DOF-S is able to outrun Default Cplex by a factor 2 on some of the most symmetrical instances.

In the past, the complete linear description of partitioning and 
packing orbitopes helped to
design an orbitopal fixing algorithm for these orbitopes.
Likewise in the future, the orbitopal fixing
algorithm for the full orbitope, by improving our understanding of this 
polyhedron, might help to
find a complete linear description of the full orbitope.
Moreover, it 
would be interesting to extend
orbitopal fixing to full orbitopes under other group actions, for 
example the cyclic group. 
Another approach to handle symmetries related to the symmetric or the cyclic group would be to find a new set of representatives whose convex hull would be easier to describe than the full orbitope. Another promising perspective would be to adapt existing symmetry-breaking techniques to break sub-symmetries as well, in the case of arbitrary sub-symmetry groups.

Finally, there is a wide range of problems featuring all column permutation symmetries and sub-symmetries, in particular many variants of the UCP, on which it would be desirable to analyze the effectiveness of our approach. Other examples of such problems can be found among covering problems, whose solution matrix has at least one 1-entry per row, like bin-packing variants. Even though computing the exact fixing has been shown NP-hard in this case, our orbitopal fixing algorithm, designed for full orbitopes, can be used to compute valid variable fixings in a covering orbitope as well. 


\bibliographystyle{plain}      

\bibliography{paper}

\end{document}